\theoremstyle{thmstyleone}%
\newtheorem{theorem}{Theorem}[section]
\newtheorem{lemma}[theorem]{Lemma}
\newtheorem{proposition}{Proposition}[section]
\DeclareMathOperator*{\argmin}{arg\,min}
\theoremstyle{thmstyletwo}%
\newtheorem{remark}{Remark}[section]
\theoremstyle{thmstylethree}%
\begin{document}

\title[ ]{Method of evolving junction on optimal path planning in flows fields}

\author*[1]{\fnm{Haoyan} \sur{Zhai}}\email{haoyanzhai@gmail.com}

\author[2]{\fnm{Mengxue} \sur{Hou}} \email{mhou30@gatech.edu}

\author[2]{\fnm{Fumin} \sur{Zhang}} \email{fzhang37@gatech.edu}

\author[1]{\fnm{Haomin} \sur{Zhou}}\email{hz25@gatech.edu}

\affil[1]{\orgdiv{Mathematics}, \orgname{Georgia Institute of Technology}, \orgaddress{\street{North Ave.}, \city{Atlanta}, \postcode{30332}, \state{GA}, \country{USA}}}

\affil[2]{\orgdiv{Electrical and Computer Engineering}, \orgname{Georgia Institute of Technology}, \orgaddress{\street{North Ave.}, \city{Atlanta}, \postcode{30332}, \state{GA}, \country{USA}}}


\maketitle

\abstract{We propose an algorithm using method of evolving junctions to solve the optimal path planning problems with piece-wise constant flow fields. In such flow fields, we prove that the optimal trajectories, with respect to a convex Lagrangian in the objective function, must be formed by piece-wise constant velocity motions. Taking advantage of this property, we transform the infinite dimensional optimal control problem into a finite dimensional optimization and use intermittent diffusion to solve the problems. The algorithm is proven to be complete. At last, we demonstrate the performance of the algorithm with various simulation examples.}

\keywords{optimal path planning, intermittent diffusion, method of evolving junctions.}


\section{Introduction}\label{intro}

Autonomous Underwater Vehicles (AUVs) are a class of submerged marine robots capable of performing persistent missions in the ocean. Over the last few decades, AUVs have been widely applied to various applications, including ocean sampling \cite{Leonard2010a,Smith2010}, surveillance and inspection \cite{Ozog2016LongtermMT}, and many more. Since most of the applications require the AUVs to execute long-term missions in unknown and dynamic oceanic environments with minimum human supervisions, their success is highly dependent on the level of autonomy of the AUVs. 

For robots operating in complex and dynamic environments, path planning is one of the crucial and fundamental functions to achieve autonomy. In short, the task is finding a feasible or optimal path, under the influence of a dynamic flow field, for an AUV to reach a predefined target point. Path planning has been studied extensively in robotics over the years. Several popular algorithms that have been applied to underwater vehicle navigation include graph based methods such as the A* method \cite{Rhoads2012,Pereira2013,kularatne2018going,kularatne2017optimal} and the Sliding Wavefront Expansion method \cite{Soulignac2011}, probability based methods like the Rapidly exploring Random Trees (RRTs) \cite{Lavalle1998,Kuffner2000,gammell2018informed,chen2019horizon,shome2020drrt}, and methods that approximate the solution of HJ (Hamilton-Jacobi) equations, such as the Level Set Method (LSM) \cite{Sethian1999,Lolla2016}. 

When the A* method is applied for an AUV, the continuous flow field is discretized into grid cells. At each step, it compares the cost of going from the current position to its neighboring cells so as to identify a path with the lowest cost. However, when the resolution, which is inverse proportional to the cell size, is not high enough, it may fail to find a feasible path even if there exists one. RRT and RRT* explore the flow field by using random sampling, with a bias towards the unexplored area \cite{karaman2011sampling,noreen2016optimal}. Like many other probabilistic based methods, RRTs provide, guaranteed in the asymptotic sense, a globally optimal solution only when the samples are dense enough. Methods to improve the convergence rate of RRT and RRT* have been introduced \cite{janson2015fast,gammell2018informed}. LSM computes a propagating front, incorporating both flow and vehicle speeds, to approximate the solution of the HJ equation. Then the optimal path is computed by back tracking along the normal direction of the wavefront from the destination position. LSM can plan a shortest time path over time-varying flow field, usually at the cost of longer computational time.

Using a regular grid to discretize the flow field can result in unnecessary large number of cells, which increases the computational burden of the planning methods. Since the flow speed in adjacent position is usually similar, the flow field can be partitioned into piece-wise constant subfields, within each the flow speed is a constant vector \cite{hou2019partitioning, kularatne2017optimal, kaiser2014cluster, ser2015flow}. 
Taking advantage of the flow field partitioning, we can parameterize a path by the index of cells crossed by the path, and the intersection points between the path and the boundaries of regions. In this way, the original path planning problem in the continuous flow field is reduced to a finite dimensional mixed integer optimization problem (MIP), with the decision variables being the index of cells crossed by the path, and the intersection points between the path and the boundaries of regions. Because of the coupling between the integer and continuous variable, the optimization problem is high dimensional, and is computationally expensive to solve. Existing works following this approach solves the MIP in a bi-level approach \cite{kularatne2017optimal,Soulignac2011}. The lower-level solver computes the minimum-cost continuous variable in each partitioned cell using convex optimization approaches, and then the upper-level solver optimizes the integer variable by the A* algorithm, with the branch cost of the decision tree being the optimal solution derived from the lower-level controller. However, because of the decoupling between the cell sequence and the intersection points, such algorithms cannot guarantee optimality of the solution, without imposing more assumptions on the problem \cite{sinha2017review}.  

In this paper, we present the interleaved branch-and-bound depth-first search with intermittent diffusion (iBnBDFS-ID) method for computing time and energy optimal paths for vehicles traveling in the partitioned flow field. Different from existing strategies that solves the MIP using a bi-level approach \cite{kularatne2018going,kularatne2017optimal,Soulignac2011}, we solve the MIP in an interleaved way, with guaranteed optimality. When a cell sequence connecting the start and the destination is found by the depth-first search (DFS), the intersection position between the optimal path and the cell boundary curves is computed by the intermittent diffusion (ID) method \cite{li2017method}. For large search tree with high branching factor, DFS has shown to be computationally impractical \cite{forrest1974practical}. To address this challenge, we propose a novel branch-and-bound (BnB) DFS method that reduces the number of nodes to be searched in the decision tree, and hence reduces the computational cost in solving the problem. The key insight is that we can find a lower bound of the the stage cost induced from traveling in one partitioned cell. Leveraging the lower bound, we compute a lower bound on the cost-of-arrival, and stop the DFS at the current node if lower bound on its cost-of-arrival exceeds the best solution found so far. 
Therefore, this work proposes a method to solve the AUV path planning problem, as an instance of MIP, without excessive demand on computing resources. Contributions of our work are as follows: 

\begin{enumerate}
    \item {\it Optimal solution structure}: We prove that for vehicle traveling in partitioned flow regions, the structure of the global optimal path is a piece-wise constant velocity motion, if the objective function is given as the total traveling time or the total energy consumption (modeled by a quadratic function). 
    \item {\it Interleaved solver for MIP}: We propose a novel computationally efficient method to solve the path planning method in the partitioned flow field, as an instance of the MIP, and proved completeness of the algorithm. Different from the existing works that decouples the process of optimizing the cell sequence assignment and the intersection position \cite{kularatne2017optimal,Soulignac2011}, our method optimizes the cell sequence assignment and the intersection position between the path and the cell boundaries at the same time. By incorporating a BnB technique in the DFS, the method avoids exhaustive search through the full decision tree. Hence it solves the planning problem with low computation cost.
    
     \end{enumerate}
   We evaluate the performance of the proposed algorithm through simulation of AUV path planning in both simulated and realistic ocean flow fields, and compare its computational cost and accuracy with the LSM method. The proposed method achieves comparable path cost, and has significantly lower computation cost compared with the LSM method.

In the next section, we present the formulation of the problem in the optimal control framework and the assumptions used in the paper. In Section \ref{algorithm}, we show how to transform the original problem into the finite dimensional optimization, and provide the algorithm. In Section \ref{completeness}, proof of the completeness of the algorithm is offered, accompanied with several numerical experiments in Section \ref{experiments}. At last, we end our paper with a brief conclusion.

\section{Problem Statement}\label{problem_statement}
We consider the vehicle moving in the space $\Omega\subset\mathbb{R}^d$ with dimension $d$, equipped with a dynamic $\dot{x}=u+v$, where $u$ is the environment flow velocity, and $v$ is the vehicle velocity or the control variable, satisfying $v\in\mathcal{U}$, providing that $\mathcal{U}$ is a compact space such that $\|v\|\leq V$, where $V$ is the max speed of the vehicle. We further assume that by taking advantage of existing methods on ocean flow field partitioning \cite{hou2019partitioning,10.3389/frobt.2021.575267}, the flow field is divided into finite number of convex regions $\{R_\alpha\}_{\alpha\in I_R}$ by boundary curves (surfaces if is in $\mathbb{R}^d,d\geq3$) $\{f_{\alpha\beta}\}_{(\alpha,\beta)\in I}$. Here
\begin{equation*}
I=\{(\alpha,\beta)\in I_R\times I_R:dim(\partial R_\alpha\cap\partial R_\beta)=d-1\},
\end{equation*}
where $dim(S)$ returns the dimension of the set $S$ and $\partial S$ is the boundary of $S$, and $f_{\alpha\beta}(x)=0$ is the $(d-1)$-dimensional compact boundary of the region $R_\alpha$ and $R_\beta$ (can be denoted as $\partial R_\alpha$ and $\partial R_\beta$ respectively). Let $x$ denote a point on the boundary, we can parameterize the cell boundary by defining a piece-wise diffeomorphism
\begin{equation*}
x(\lambda):D\subset\mathbb{R}^{d-1}\longrightarrow\left\lbrace y:f_{\alpha\beta}(y)=0\right\rbrace,
\end{equation*}
where $D$ is a $(d-1)$-dimensional unit ball.
Also within each region, we suppose that the flow velocity $u$ is a constant vector. Hence we can denote the flow velocity in each region $R_\alpha$ separately by $u_\alpha$. The vehicle needs to be controlled from an initial position $x_0$, crossing different regions and finally reaching the target position $x_f$.



Since there could be infinitely many feasible paths linking $x_0$ and $x_f$, a cost function is introduced to measure the travel expense with respect to different potential trajectories. We denote the cost function to be
\begin{equation}\label{cost_func}
J(v,T)=\int_0^TL(v(t))dt,
\end{equation}
letting $\gamma(t)$ be a continuous path with $\gamma(0)=x_0$, $\gamma(T)=x_f$ and $\dot{\gamma}(t)=u+v$. In this paper, we discuss the problem with the cost function specifically being the total travel time, that is $L(x,v)=1$ and the kinetic energy combining a constant running cost, in which case $L(x,v)=\|v\|^2+C$ where $C$ and $\|v\|$ are not simultaneously $0$ for all $x\in\Omega$ because of technical issues which will be discussed in Section \ref{completeness_energy}. Our goal is to find the optimal control function $v(t)$ with minimum cost, and can be expressed as the following problem:
\begin{equation}\label{problem}
\begin{aligned}
\min_{v,T}&\int_0^TL(v)dt\\
s.t.\ \ \ &\dot{x}=v+u,\\
&x(0)=x_0,\\
&x(T)=x_f,\\
&\max_{t\in[0,T]}\|v\|\leq V
\end{aligned}
\end{equation}

In the next section, we will concretely discuss our method, the idea of which is to seek a way to change the infinite dimensional optimal control problem into a finite dimensional optimization problem.

\section{Our Method}\label{algorithm}

All possible trajectories will continuously pass through a sequence of regions. 
Since the flow velocity $u$ is a constant vector in each region, we have the following theorem that will be proved in the Appendix:
\begin{theorem}\label{thm:general}
In the optimal solution for 
\begin{align*}
\min_{v,T}&\int_0^TL(v)dt\\
s.t.\ \ \ &\dot{x}=f(v+u),\nonumber\\
&x(0)=x_0,\nonumber\\
&x(T)=x_f,\nonumber\\
&\max_{t\in[0,T]}\|v\|\leq V\nonumber.
\end{align*}
where $u$ a constant vector and $f$ is invertible, the velocity $v$ is a constant vector.
\end{theorem}

With Bellman's principle, it is the fact that the optimal path should be formed via a piece-wise constant velocity motion. Thus, we restrict the velocity of the vehicle in each region to be a constant. Further, the regions are convex, therefore, the straight line path always lies inside each of the cell crossed and the path is continuous. Now we can introduce the notation $v_i$ to be the vehicle velocity in the $i^{th}$ cell crossed.

Because of Theorem \eqref{thm:general}, we can parameterize solution to \eqref{problem} by i) the cell sequence that the path goes through, and ii) the position where the path intersects with the cell boundaries. We assume that the initial and the terminal position in \eqref{problem} are on the boundary of the partitioned cells. In general this can be achieved by incorporating a fixed rule to partition the cells containing $x_0$ and $x_f$ into two separate cells, with the new generated boundary going through $x_0$ and $x_f$. 

Let $\mathcal{C} = \{c_1, c_2, \dots, c_n\}$ denote a sequence of the index of cells that a path travels through, $c_i\in I_R, \forall i\in[1,n]$.
We define  the junction set $\{x_i\}_{i=0}^n$ to be the intersections between a trajectory and the cell boundaries. The junction $x_i$ is on the boundary between the two regions indexed as $c_{i}$ and $c_{i+1}$.  
The junction $x_0$ is the initial junction and the destination $x_f$ is the last junction e.g.  $x_n=x_f$. 
For each region $R_i$, the entrance junction is $x_{i-1} $ and the exit junction is $x_i$. Fig.~\ref{fig:example_workspace} shows illustration of parameterizing a feasible path using junctions and the cell sequence. 

\begin{figure}
    \centering
    \includegraphics[width=0.4\textwidth]{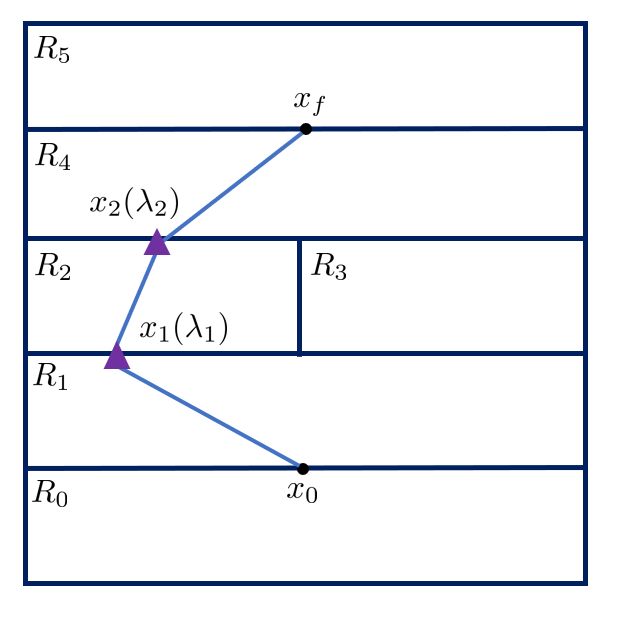}
    \caption{Example of parameterizing a feasible path in the partitioned domain by the cell sequence and the junction position. The purple triangles are the junction position. The path goes through 3 cells. Index of 3 cells are $c_1 = 1, c_2 = 2, c_3 = 4$. On the boundary $f_{12}$ and $f_{24}$ there are two junctions, position of which is parameterized by $\lambda_1$ and $\lambda_2$. }
    \label{fig:example_workspace}
\end{figure}

Given the above mentioned parameterization to feasible trajectories, to solve the planning problem, we need to i) determine the sequence of regions that the optimal path goes through, and ii) compute the optimal position of intersection position between the trajectory and the cell boundaries. 
In this section, we propose a novel method to solve the planning problem. The key idea of the method is based on two insights. First,  we only need to determine the cell sequence when the vehicle reaches cell boundary. Each assignment of visited cell sequence will produce a branch of a decision tree, with each branch in the tree representing a cell that the vehicle crosses. The decision tree contains a finite number of branches, and a feasible solution must be a path from the root of the tree to the target position. Second, the optimal junction position can be easily computed if we fix the assignment of cell sequence.

\subsection{Minimize total travel time}\label{min_travel_time}
When minimizing total travel time is the goal, we further let the vehicle move in maximum speed $V$. Let $u_{c_i}$ denote the flow speed in the cell indexed by $c_i$. Then the cost function can be converted in the format below:
\begin{equation*}
J(v,T, \mathcal{C})=\int_0^Tds=T=\sum_{i=1}^n g^t_i(x_i, x_{i-1}, c_i),
\end{equation*}
where $g^t_i$ is the travel time in region $R_{c_i}$ and is expressed as:
\begin{equation*}
g^t_i=\frac{\|x_i-x_{i-1}\|}{\|v_i+u_{c_i}\|}.
\end{equation*}
Since flow and vehicle velocities are constant in each cell, the motion must be in straight line, which leads to
\begin{align*}
&\frac{x_i-x_{i-1}}{\|x_i-x_{i-1}\|}=\frac{v_i+u_{c_i}}{\|v_i+u_{c_i}\|}\\
\Longrightarrow&\|v_i+u_{c_i}\|^2-\frac{2(x_i-x_{i-1})^T u_{c_i}}{\|v_i+u_{c_i}\|}+\|u_{c_i}\|^2-V^2=0\\
\Longrightarrow&\|v_i+u_{c_i}\|=\frac{(x_i-x_{i-1})^Tu_{c_i}}{\|x_i-x_{i-1}\|} \\
&\pm\left(\left(\frac{(x_i-x_{i-1})^Tu_{c_i}}{\|x_i-x_{i-1}\|}\right)^2+V^2-\|u_{c_i}\|^2\right)^{\frac{1}{2}}.
\end{align*}
To minimize the travel time, we take the plus sign so that
\begin{equation}\label{v_plus_u_norm}
\begin{aligned}
&\|v_i+u_{c_i}\| =\frac{(x_i-x_{i-1})^Tu_{c_i}}{\|x_i-x_{i-1}\|}+\\
&\left(\left(\frac{(x_i-x_{i-1})^Tu_{c_i}}{\|x_i-x_{i-1}\|}\right)^2+V^2-\|u_{c_i}\|^2\right)^{\frac{1}{2}}
\end{aligned}
\end{equation}
and have the travel time in region $R_{c_i}$ as
\begin{align*}
g_i^t&=\frac{1}{\|u_{c_i}\|^2-V^2}\Big((x_i-x_{i-1})^Tu_{c_i}-\\
&\sqrt{\left((x_i-x_{i-1})^Tu_{c_i}\right)^2+\|x_i-x_{i-1}\|^2(V^2-\|u_{c_i}\|^2)}\Big).
\end{align*}
At last, since $x_i$ is on the boundary of the regions indexed by $c_{i}$ and $c_{i+1}$, we have the smooth parameterization of each $x_i$ as $x_i=x_i(\lambda_i)$ where $\lambda_i\in D\subset\mathbb{R}^{d-1}$, transforming finally the cost function to be
\begin{equation*}
\begin{aligned}
J(\lambda_1,\cdots,\lambda_{n-1}, \mathcal{C})=&g^t_1(x_1(\lambda_1),x_0, c_1)\\
+&g^t_n(x_{n-1}(\lambda_{n-1}),x_f, c_n)\\+&\sum_{i=2}^{n-1}g^t_i(x_i(\lambda_i),x_{i-1}(\lambda_{i-1}), c_{i}).
\end{aligned}
\end{equation*}
and the problem is changed to a finite dimensional optimization problem
\begin{equation}\label{time_func}
\min_{\lambda_i\in D, c_i\in I_R, i=1,\cdots,n-1}J(\lambda_1,\cdots,\lambda_{n-1}, \mathcal{C}).
\end{equation}


\subsection{Minimize energy}\label{min_energy}
If the cost function is the kinetic energy with a constant running cost $C\geq0$
\begin{equation*}
J(v,T)=\int_0^T\|v\|^2+Cdt,
\end{equation*}
we first consider the constant velocity motion in each region $R_i$ within time $t_i$. Fixing the entrance and exit junctions $x_{i-1}$ and $x_i$, we can derive vehicle speed as
\begin{equation*}
v_i=\frac{x_i-x_{i-1}}{t_i}-u_{c_i},
\end{equation*}
and 
\begin{equation}\label{energy_speed_time_condition}
\|v_i\|^2=\frac{\|x_i-x_{i-1}\|^2}{t_i^2}+\|u_{c_i}\|^2\-\frac{2(x_i-x_{i-1})^Tu_{c_i}}{t_i}.
\end{equation}
the cost function in the specific region $R_{c_i}$ can be rewritten as
\begin{equation}\label{lb_energy}
\begin{aligned}
J(t_i)&=\int_0^{t_i}\|v_i\|^2+Cds\\
&=\frac{\|x_i-x_{i-1}\|^2}{t_i}+(\|u_{c_i}\|^2+C)t_i\\
&-2(x_i-x_{i-1})^Tu_{c_i}\\
&\geq2\sqrt{\|u_{c_i}\|^2+C}\|x_i-x_{i-1}\| \\
& -2(x_i-x_{i-1})^T u_{c_i}
\end{aligned}
\end{equation}
with equality holds at
\begin{equation}\label{energy_opt_time}
t_i^*=\frac{\|x_i-x_{i-1}\|}{\sqrt{\|u_i\|^2+C}}.
\end{equation}
Hence, if the maximum vehicle speed is large enough, that is, 
\begin{equation}\label{equ:energy_equation}
    V^2 \geq \frac{\|x_i-x_{i-1}\|^2}{t_i^{*2}}+\|u_{c_i}\|^2-\frac{2(x_i-x_{i-1})^T u_{c_i}}{t_i^*},
\end{equation}
we can find the optimal vehicle forward speed by replacing $t_i$ in \eqref{energy_speed_time_condition} with the optimal solution in \eqref{energy_opt_time}:
\begin{equation}\label{energy_speed_condition}
    \|v_i\|^2=2\|u_{c_i}\|^2+C-\frac{2(x_i-x_{i-1})^Tu_{c_i}}{\|x_i-x_{i-1}\|}\sqrt{\|u_{c_i}\|^2+C}.
\end{equation}
Hence, if \eqref{equ:energy_equation} holds,
we let the vehicle move in the speed of $\|v_i\|$ in \eqref{energy_speed_condition}. 

However, if the \eqref{equ:energy_equation} does not hold, we set the vehicle speed to be the maximum speed $V$ and the cost function is reduced to be $J=(V^2+C)t_i^*$ where
\begin{equation}\label{energy_t_i_star}
\begin{aligned}
t_i^*=&\frac{1}{\|u_{c_i}\|^2-V^2}\Bigg((x_i-x_{i-1})^Tu_{c_i}-\\&\Big(\left((x_i-x_{i-1})^Tu_{c_i}\right)^2\\
+& \|x_i-x_{i-1}\|^2(V^2-\|u_{c_i}\|^2)\Big)^{\frac{1}{2}}\Bigg),
\end{aligned}
\end{equation}
which is a root of
\begin{equation*}
(\|u_{c_i}\|^2-V^2)t_i^2-2(x_i-x_{i-1})^Tu_{c_i}t_i+\|x_i-x_{i-1}\|^2=0.
\end{equation*}
Thus the energy spent in each region is
\begin{align*}
&g_i^e(x_i,x_{i-1})=\\
&\left\lbrace\begin{array}{ll}
    \begin{aligned}
    &2\sqrt{\|u_{c_i}\|^2+C}\|x_i-x_{i-1}\|\\
    &-2(x_i-x_{i-1})^Tu_i
    \end{aligned}& \text{if \eqref{equ:energy_equation} holds} \\
    (V^2+C)t_i^* & \text{otherwise}
\end{array}\right.
\end{align*}
where $t_i^*$ is defined in \eqref{energy_t_i_star}. By using the same parametrization as in the time-optimal planning, we finally have the problem to be a finite dimensional optimization formulated as
\begin{equation}\label{energy_func}
\min_{\lambda_i\in D,c_i\in I_R, i=1,\cdots,n-1}J(\lambda_1,\cdots,\lambda_{n-1}, \mathcal{C})
\end{equation}
where 
\begin{equation*}
\begin{aligned}
J(\lambda_1,\cdots,\lambda_{n-1}, \mathcal{C})=&g^e_1(x_1(\lambda_1),x_0, c_1)\\
+&g^e_n(x_{n-1}(\lambda_{n-1}),x_f, c_n)\\+&\sum_{i=2}^{n-1}g^e_i(x_i(\lambda_i),x_{i-1}(\lambda_{i-1}), c_i).
\end{aligned}
\end{equation*}

Furthermore, $g^t_i$ and $g^e_i$ has the following properties: 
\begin{proposition}\label{differentiable_prop_energy}
If there exists a feasible trajectory from $x_{i-1}$ to $x_i$ in $R_i$ and $V\neq\|u_i\|$, then $g_i^t(x_i,x_{i-1}, c_i)$ and $g_i^e(x_i,x_{i-1}, c_i)$ are differentiable.
\end{proposition}
We give the proof of this property in the Appendix. With this proposition, we can take the derivative of the objective function, which is pivotal for applying the Intermittent Diffusion method described in Section \ref{subsec:intermittent diffusion}.



\subsection{Construction of decision tree}
We introduce a tree structured graph to model how the sequence of cell that the path crosses affects the total cost. In the tree, each node represents a boundary curve that contains a junction point. 
To construct the decision tree, we first define two boundary curves $f_{c_1, c_2}, f_{c_3, c_4}$ as adjacent if 
\begin{equation}\label{adjacent_condition}
   \{c_1, c_2\}\cap\{c_3, c_4\} \neq \emptyset,
\end{equation}
indicating that the two curves are two boundaries of the same cell.

\subsubsection{Decision tree traversal}
Starting from the boundary curve containing the initial position, which is the root node, a directed decision tree can be formed by connecting the adjacent boundary curves in the domain. The branch generation stops when the node is the boundary curve containing the destination position. Each node in the decision tree represents a boundary curve that a feasible path will go through. A connected path in the decision tree, starting from the root node to the target node represent a sequence of cells that a feasible path will cross. Fig.~\ref{fig:tree_struct} shows the decision tree constructed from a partitioned workspace. 

The construction of the entire decision tree is not necessary, and is time-consuming. However, for the purpose of clearly presenting the concept of the planning method, we will discuss how the tree can be fully constructed, and then present the branch pruning technique. 
We construct the decision tree iteratively using the depth-first search method.   Let $n_c$ represent the current node. At each step, we search for all the boundary curves adjacent to the current node, and call the current node as the predecessor of the new node. Since the optimal path will not visit one boundary curve more than one time, the optimal cell sequence should not include loops. Hence, when searching for the adjacent nodes to be expanded next, we will not expand an adjacent node of $n_c$ if it is already visited. This node generation process terminates when the target node is visited. 

By constructing the decision tree we can find all the cell sequences connecting the root and the terminal node. For one cell sequence, the MIP \eqref{time_func} and \eqref{energy_func} reduces to a finite dimensional non-convex optimization problem over the junction positions. Next we show how this optimization problem can be solved using the Intermittent Diffusion method.

\subsubsection{Intermittent Diffusion} \label{subsec:intermittent diffusion}
The objective functions in \eqref{time_func} and \eqref{energy_func} are both differentiable. Hence we use the Intermittent Diffusion (ID) to get the global minimizer \cite{chow2013global}, the key idea of which is adding white noise to the gradient flow intermittently. Namely, we solve the following stochastic differential equation (SDE) on the configuration space
\begin{equation}\label{id_sde}
d\lambda=-\nabla J(\lambda)d\theta+\sigma(\theta)dW(\theta)
\end{equation}
where $\lambda=(\lambda_1,\cdots,\lambda_{n-1})\in D^{n-1}$ and $W(\theta)$ is the standard Brownian motion. The diffusion is a piece-wise constant function defined by
\begin{equation*}
\sigma(\theta)=\sum_{i=1}^N\sigma_iI_{[S_i,T_i]}(\theta)
\end{equation*}
where $\sigma_i$ are constant and $I_{[S_i,T_i]}(\theta)$ is the characteristic function on interval $[S_i,T_i]$ with $0\leq S_1<T_1<S_2<T_2<\cdots<S_N<T_N<S_{N+1}=T$.

Thus, if $\sigma(\theta)=0$, we obtain the gradient flow back while when $\sigma(\theta)\neq0$, the solution of \eqref{id_sde} has positive probability to escape the current local minimizer. The theory of ID indicates that the solution of \eqref{id_sde} visits the global minimizer of $J$ with probability arbitrarily close to $1$ if $\min_i\lvert T_i-S_i \rvert$ is large enough, which is guaranteed by Theorem \ref{thm:ID} in Appendix. 

We use the forward Euler discretization to discretize the above SDE and get
\begin{equation}\label{update}
\lambda^{k+1}=\lambda^k-h\nabla J(\lambda^k)+\sigma_k\xi^k\sqrt{h}.
\end{equation}
The constant $h$ is the step size, $\sigma_k$ is the coefficient chosen to add the intermittent perturbation and $\xi^k\sim\mathcal{N}(0,1)$ is a Gaussian random variable. In practice, the global minimizer can be reached by tuning the white noise strength $\sigma_k$ as well as setting the total evolution round $N$ long enough.

We summarize the ID algorithm in Algorithm \ref{alg_2} with the objective function being \eqref{time_func} or \eqref{energy_func}.\\

\subsubsection{Branch cost lower bound}
We leverage a BnB technique to prune the branches in the decision tree, in order to save the computation cost in both node generation and solving for the optimal junction position. We approach this problem by leveraging the lower bound of the decision tree branch cost. In the decision tree, each branch represents a path segment connecting the junction on one boundary to the junction on one of its adjacent boundary. Hence, the branch cost of the decision tree represents the stage cost generated from going from one junction to another. However, since the optimal junction position is unknown when we construct the decision tree, the exact optimal branch cost cannot be calculated. Hence we introduce the lower bound of the branch cost, and use the DFSBnB technique \cite{poole2010artificial} to prune some of the branches in the tree. For the time-optimal and energy-optimal planning, we find the upper bound of the branch cost as follows. Let us define the maximum and minimum distance between two junctions $x_{i-1}$ and $x_i$,
\begin{equation*}
\begin{aligned}
    d_{\rm max} &= \max\limits_{x_i, x_{i-1}} \|x_i - x_{i-1}\|, \\
    \text{s.t.} \,&f_{c_i,c_{i+1}}(x_i) = 0, f_{c_{i-1}, c_i}(x_{i-1})=0, \\
    d_{\rm min} &= \min\limits_{x_i, x_{i-1}} \|x_i - x_{i-1}\|, \\
    \text{s.t.} \,&f_{c_i,c_{i+1}}(x_i) = 0, f_{c_{i-1}, c_i}(x_{i-1})=0.
\end{aligned}
\end{equation*}
We can find a lower bound on the minimum time spent on the path segment,
\begin{equation}\label{travel_time_minimum}
\begin{aligned}
    g_i^t & \geq \frac{1}{V^2 - \|u_{c_i}\|^2} ( \|x_i -x_{i-1}\| V \\
    & - \sqrt{\|x_{i} - x_{i-1}\|^2\|u_{c_i}\|^2 - ((x_i - x_{i-1})^T u_{c_i})^2)} \\
    & - (x_i-x_{i-1})^T u_{c_i})\\
    & \geq \frac{1}{V^2 - \|u_{c_i}\|^2}(d_{\rm min} V - \|(x_i - x_{i-1})\|\|u_{c_i}\| \\
    & + \|(x_i - x_{i-1})^T u_{c_i}\| - (x_i- x_{i-1})^T u_{c_i})\\
    & \geq \frac{d_{\rm min}}{ V + \|u_{c_i}\|}\triangleq g_{i, lb}^t.
\end{aligned}
\end{equation}
This lower bound is the travel time when the vehicle travels in the largest possible total speed $V + \|u_{c_i}\|$, which is the situation that $u_{c_i}$ is in the same direction as the shortest path segment from $x_{i-1}$ to $x_i$.

Similarly, given \eqref{lb_energy} we find a lower bound, denoted as $g_{i, lb}^e$ on the energy spent in one cell,
\begin{equation}\label{energy_minimum}
    g_{i,lb}^e = \max\{2d_{\rm min}\sqrt{\|u_{c_i}^2 + C\|}  - 2d_{\rm max} \|u_{c_i}\|,  0\}.
\end{equation}

\subsubsection{Branch-and-Bound method}
We use the DFS algorithm to iteratively generate the nodes, starting from the root and terminates when it reaches the destination node. After the first cell sequence connecting the root with the destination node is found, we use ID algorithm to compute the optimal junctions that result in the minimum total travel cost (Line \ref{line12}). To avoid traversing all feasible cell sequences, the algorithm maintains the lowest-cost path to the target found so far, and its cost. At each step of node generation, for each of the adjacent cell $m_i$ of the current node $n_c$, we compute a lower bound of the total cost of arrival, from the root to $v_i$, 
\begin{equation}\label{lb_coa}
    f_{g}(m_i) = f_g(n_c) + g_{i, lb},
\end{equation}
where for the time-optimal planning, $g_{i, lb} = g_{i, lb}^t$ is computed by \eqref{travel_time_minimum}, and for the energy-optimal planning, $g_{i, lb} = g_{i, lb}^e$ is computed by \eqref{energy_minimum}. If $f_g(m_i)$ is larger than the lowest-cost path found so far, then all the path that goes through the cell boundary represented by $m_i$ cannot be the optimal solution, since its total cost will be larger than the lowest-cost path found so far. Thus we stop the DFS from $m_i$ to its child nodes, and go to the next adjacent cell of $n_c$ to continue the search (Line \ref{line24}).

\begin{figure*}
    \centering
    \includegraphics[width=0.8\textwidth]{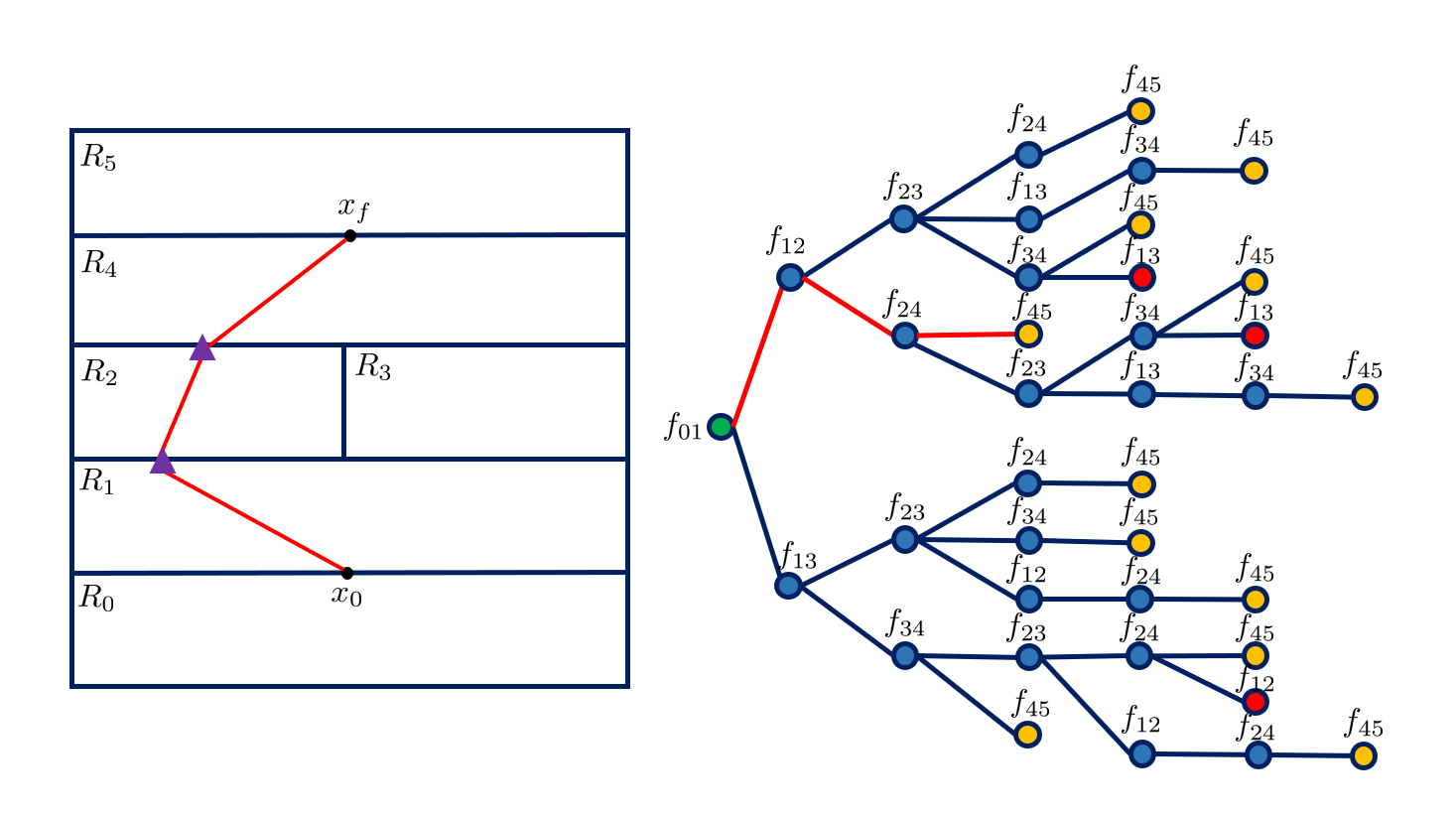}
    \caption{\textbf{(Left):} Example of a partitioned space. The domain is partitioned into 6 cells. the red line represents one feasible path from the start to the destination position, with the junctions represented by the purple triangles. \textbf{(Right):} The entire decision tree for this partitioned space. The root node shown by the green circle is the boundary curve containing the starting position, and the terminal node (yellow circle) represents the boundary curve containing the destination node. For each node, it is connected to its child node if it is an adjacent boundary to its child.  The red nodes represent the nodes with no unvisited neighboring nodes. The red path in the graph corresponds to the cell sequence crossed by the feasible path shown on the left figure.}
    \label{fig:tree_struct}
\end{figure*}

\begin{algorithm}[h]
\setstretch{0.8}
\label{alg_1}
\caption{Main algorithm}
\KwData{initial position $x_0$, final position $x_f$, partitioned cell $\{R_\alpha\}_{\alpha\in I_R}$}
\KwOut{optimal junction position $x(\lambda^{opt})$}
visited $\leftarrow\{\text{FALSE}\}$\;
$\mathcal{C}\leftarrow \{\}$\;
CostArrival $=0$\;
TotalCost\_ub $= 0$\;
Start node $s = [c_0, c_1]$, goal node $d = [c_n, c_{n+1}]$\;
$\lambda^{opt}$ = findAllCellSeq($s$, $d$, visited, $\mathcal{C}$, adjacency, $f_g(s)$, TotalCost\_ub)\;

\texttt{\\}
\SetKwFunction{Ffindallpath}{\textbf{findAllCellSeq}}
\SetKwProg{Fn}{Function}{:}{}
\Fn{\Ffindallpath{$n_c$, d, visited, $\mathcal{C}$, $f_g(n_c)$, TotalCost\_ub}}{
visited($n_c$) = TRUE\;
$\mathcal{C}$.append($n_c$)\;
\If{$n_c=d$}{
$\lambda =$ Intermittent\_Diffusion($\mathcal{C}$)\label{line12}\;
\If{$J(\lambda)< $ TotalCost\_ub or TotalCost\_ub $=0$}{
TotalCost\_ub $\leftarrow J(\lambda)$\label{line14}\;
$\lambda^{opt} = \lambda$\;}
}
\Else{
\For{all adjacent node $\{m_j\}_{j=1}^M$ of $n_c$}{
\If{visited$(m_j)$ = FALSE}{
Compute $g_{j, lb}$ using \eqref{travel_time_minimum} or \eqref{energy_minimum}\;
Compute $f_g(m_j)$ using \eqref{lb_coa}\;
\If{$f_g(m_j) >$ TotalCost\_ub and TotalCost\_ub $\neq 0$}{
continue\label{line24}\;} 
findAllCellSeq($m_j$, d, visited, $\mathcal{C}$, $f_g(m_j)+g_{j, lb}$, TotalCost\_ub)\label{line26}\; 
}
}
$\mathcal{C}$.pop\;
visited(u) = FALSE\;
}
}
\end{algorithm}

\begin{algorithm}[h]
\label{alg_2}
\setstretch{0.8}
\caption{Intermittent Diffusion}
\KwData{cell sequence $\mathcal{C}$}
\KwOut{the optimal junction position $\lambda^*$ given the fixed cell sequence $\mathcal{C}$}
Initialize $\lambda^0 = 0$\;
Set evolution step number $N$\;
Choose threshold $\epsilon$\;
\For{$i=1,\cdots,N$}{
  Choose perturbation duration $T_i$\;
  Choose perturbation intensity $\sigma_i$\;
  \For{$j=1,\cdots,T_i$}{
    Update $\lambda^i$ using \eqref{update}\;
  }
  Set $\sigma_i=0$\;
  \While{not converges}{
    Update $\lambda^i$ using \eqref{update}\;
  }
}
Set $\lambda^* =\arg\min_{i\leq N}J(\lambda^i)$\;
\end{algorithm}

\section{Completeness}\label{completeness}
In this section, we demonstrate that Algorithm \ref{alg_1} is complete if $L(v)$ is a convex function of $v$. 
\begin{theorem}\label{converge_thm}
If the flow field is piece-wise constant and
\begin{equation}\label{flow_v_bound}
\max_{\alpha\in I_R}\|u_\alpha\|<V,
\end{equation}
let $Q$ be the set of global minimizers, $U$ be a neighborhood of $Q$. Then for any $\epsilon>0$, there exists $T_0,N_0,\sigma_0$ such that if $T_i>T_0$, $\sigma_i<\sigma_0$ (for $i=1,2,\cdots,N$) and $N>N_0$ where $T_i,\sigma_i,N$ are parameters in Algorithm \ref{alg_2}, $\mathbb{P}(\lambda^{opt}\in U)\geq1-\epsilon$, where $\lambda^{opt}$ is the optimal solution found by Algorithm \ref{alg_1}. Thus, Algorithm \ref{alg_1} is complete.
\end{theorem}

The idea is that by Bellman principle, optimal trajectory admits an optimal sub-structure property, that is, any piece of the optimal trajectory is also optimal for the sub-problem. By applying this principle, we consider the path segment in each single region, and try to construct a solution $\psi$ with two types of objective function described in Section \ref{problem_statement}, for the Hamilton-Jacobi-Bellman equation (HJB)
\begin{equation} \label{HJB}
\psi_t(x,t)+H(x,\nabla\psi(x,t))=0
\end{equation}
where
\begin{equation*}
H(x,p)=\max_{\|v\|\leq V}\left\lbrace p^T (v+u)-L(x,v)\right\rbrace.
\end{equation*}
is the Hamiltonian and
\begin{equation*}
\begin{aligned}
\psi(x,t)=\min_{v}\Big\lbrace&\int_0^t L(\gamma,v)ds:\dot{\gamma}=v+u,\gamma(0)=x_0,\\
&\gamma(t)=x,\max_{s\in[0,t]}v(x)\leq V\Big\rbrace
\end{aligned}
\end{equation*}
is the value function. Since the original problem takes the minimum over all possible time, we take $\min_t\psi(x,t)$ to get the optimizer in the given region and claim that the corresponding motion gives a global optimal for the sub-problem in the single region. Hence in the following we give optimality proof of solution of the sub-problem in each constant flow region. Given the optimal solution of the sub-problem in each constant flow region, proof of Theorem \ref{thm:general} is provided in the Appendix.

\subsection{Total Travel Time}\label{completeness_time}
$L(x,v)=1$ for total travel time minimization. To construct the value function at the point $(x,t)$, we introduce the maximum speed constant velocity motion in the region with flow velocity $u$, that is, in this region, the vehicle moves in straight line from $x_0$ to $x$ with velocity $v+u$ and $\|v\|=V$, $\|v+u\|$ is given by \eqref{v_plus_u_norm}. We claim that
\begin{lemma}\label{time_lemma}
In a constant flow field, the maximum speed straight line motion is optimal if we minimize the total travel time
\begin{align*}
\min_{v,T}&\int_0^Tdt\\
s.t.\ \ \ &\dot{x}=v+u,\\
&x(0)=x_0,\\
&x(T)=x_f,\\
&\max_{t\in[0,T]}\|v\|\leq V.
\end{align*}
\end{lemma}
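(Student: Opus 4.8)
The plan is to show that the maximum-speed straight-line motion realizes the smallest travel time permitted by an elementary displacement bound, and is therefore time-optimal. Write $d=x_f-x_0$, and treat the trivial case $d=0$ separately (then $T^*=0$). First I would integrate the dynamics along an arbitrary admissible pair $(v,T)$ steering $x_0$ to $x_f$: since $\dot x=v+u$ with $u$ constant, $d=uT+\int_0^T v(t)\,dt$, whence $\|d-uT\|=\bigl\|\int_0^T v(t)\,dt\bigr\|\le\int_0^T\|v(t)\|\,dt\le VT$. This is the only place the differential constraint is used; everything afterwards is geometry and calculus.

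Next I would convert $\|d-uT\|\le VT$ into a lower bound on $T$. Squaring gives $(\|u\|^2-V^2)T^2-2(d^Tu)T+\|d\|^2\le0$. Under the standing hypothesis \eqref{flow_v_bound} we have $\|u\|<V$, so the leading coefficient is negative and the left-hand side is a downward-opening quadratic in $T$ that is strictly positive at $T=0$; hence the admissible times form a half-line $[T^*,\infty)$, where $T^*$ is the unique positive root — the expression \eqref{energy_t_i_star} with $x_i-x_{i-1}$ replaced by $d$ and $u_i$ by $u$. In particular every admissible trajectory takes time at least $T^*$.

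Then I would check that the maximum-speed straight-line motion is admissible and achieves exactly $T^*$. Because $\|u\|<V$, the continuous map $\beta\mapsto\|\beta d-u\|$ equals $\|u\|<V$ at $\beta=0$ and tends to infinity, so for some $\beta>0$ one has $\|\beta d-u\|=V$; the constant control $v\equiv\beta d-u$ then yields velocity $v+u=\beta d$ along $d$ with $\|v\|=V$ and $\|v+u\|$ as in \eqref{v_plus_u_norm}, and a short rationalization shows the traversal time $\|d\|/\|v+u\|$ equals $T^*$. Combined with the previous paragraph this gives optimality. An equivalent, perhaps cleaner, route is to observe that at $T=T^*$ every inequality in the displacement bound must be an equality: $\int_0^T\|v\|\,dt=VT$ with $\|v\|\le V$ forces $\|v(t)\|=V$ a.e., and equality in the triangle inequality for $\int_0^T v\,dt$ forces $v(t)$ to be a.e. a fixed vector, so the time-optimal trajectory is necessarily this straight-line motion and is essentially unique.

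I expect the main obstacle to be the root bookkeeping in the second step: identifying which root of the quadratic is the minimal time and confirming it is positive even when $d^Tu<0$. This is precisely where \eqref{flow_v_bound} enters; dropping it also reintroduces the singular case $V=\|u\|$ and the possibility that $x_f$ is not reachable by a straight line at all (cf.\ the remark after \eqref{flow_v_bound}). Everything else is just the triangle inequality for vector-valued integrals. One could alternatively verify that the value function given by the constant-velocity cost solves the HJB equation \eqref{HJB} with the bang maximizer $\|v\|=V$, in line with the framework around \eqref{HJB}, but the displacement-bound argument is more self-contained.
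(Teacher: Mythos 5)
Your proof is correct, but it takes a genuinely different route from the paper. The paper argues via dynamic programming: it writes down the candidate value function $\psi(x,t)=\|x-x_0\|/\|v+u\|$ for the maximum-speed straight-line motion, decomposes $v$ and $u$ into components parallel and perpendicular to $x-x_0$, and verifies by direct computation that $\psi_t+H(x,\nabla\psi)=0$, i.e.\ that $\psi$ solves the Hamilton--Jacobi--Bellman equation \eqref{HJB}; optimality is then read off from the verification principle. You instead integrate the dynamics to get the displacement identity $x_f-x_0-uT=\int_0^T v\,dt$, apply the triangle inequality to obtain $\|x_f-x_0-uT\|\le VT$, and show that under \eqref{flow_v_bound} the resulting quadratic inequality confines all admissible times to $[T^*,\infty)$ with $T^*$ the positive root (the expression \eqref{energy_t_i_star}), which the max-speed straight-line motion attains. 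Your argument is more elementary and in some respects tighter: it avoids the smoothness and reachability caveats that the paper's verification argument quietly skirts (the paper's $\psi$ is $+\infty$ off the reachable set and the computation is only carried out on the finite part), and your equality analysis additionally yields a.e.\ uniqueness of the optimal control, which the paper does not claim. What the paper's HJB route buys is uniformity: the same value-function template is reused verbatim for the quadratic-energy cost in Section 4.2 and for the general convex Lagrangian theorem, where a displacement bound alone would not determine the optimal speed. One cosmetic point: your achievability step does not actually need the ``short rationalization'' identifying $\|d\|/\|v+u\|$ with $T^*$ --- since the constant control makes the displacement inequality an equality, its traversal time is a positive root of the quadratic, and $T^*$ is the only one.
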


\subsection{Quadratic Energy with a constant running cost}\label{completeness_energy}

\begin{lemma}\label{energy_lemma}
In a constant flow field, the minimizer of energy optimal problem
\begin{align*}
\min_{v,T}&\int_0^T\|v\|^2+Cdt\\
s.t.\ \ \ &\dot{x}=v+u,\\
&x(0)=x_0,\\
&x(T)=x_f,\\
&\max_{t\in[0,T]}\|v\|\leq V
\end{align*}
is the constant velocity motion in the speed of $\|v\|$, where
\begin{equation*}
\|v\|=\left\lbrace
\begin{array}{ll}
\begin{aligned}
\Big(&C+2\|u\|^2\\&-\sqrt{C+\|u\|^2}\frac{2(x-x_0)^Tu}{\|x-x_0\|}\Big)^{1/2}
\end{aligned}
&\text{if \eqref{equ:energy_equation} holds}\\
V&\text{otherwise}
\end{array}
\right..
\end{equation*}
\end{lemma}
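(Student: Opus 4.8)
The plan is to follow the dynamic-programming route already laid out above: first show that for every fixed terminal time $T$ the constant-velocity straight-line motion from $x_0$ to $x$ is the optimal admissible control arriving at time $T$, and then minimize the resulting cost over $T$. For the fixed-$T$ step I would invoke the sufficiency (verification) principle for the Hamilton--Jacobi--Bellman equation \eqref{energy_hjb}: the computation running from \eqref{energy_psi_set1}--\eqref{energy_psi_set3} to \eqref{energy_H} shows that the finite branch of the constructed $\psi(x,t)$ solves \eqref{energy_hjb} precisely where the unconstrained Hamiltonian maximizer $v^{*}=\tfrac12\nabla\psi$ is admissible, i.e.\ where \eqref{equ:energy_equation} holds, and that following the feedback $v=v^{*}$ from $x_0$ reproduces exactly the motion with $v=(x-x_0)/t-u$. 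Together with $\psi(x_0,0)=0$ and the extension $\psi\equiv+\infty$ on states unreachable within time $t$, this identifies $\psi(\cdot,t)$ with the true value function and the straight-line motion with an optimal control for that $t$. Equivalently and more self-containedly, for fixed $T$ the constraint $\int_0^T v\,dt = x-x_0-uT$ together with Jensen's inequality for $\|\cdot\|^2$ forces $\int_0^T(\|v\|^2+C)\,dt\ge \|x-x_0\|^2/T-2(x-x_0)^{T}u+(C+\|u\|^2)T$, with equality only for the constant control.

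The observation that ties the two branches together is that whenever \eqref{energy_assumption} fails for a given $t$ one has $\|x-x_0-ut\|>Vt$, so \emph{no} admissible control at all can reach $x$ in time $t$; hence the admissible terminal times are exactly $\{t: \eqref{energy_assumption}\text{ holds}\}$, and on each of them the least attainable cost equals the finite branch of $\psi(x,t)$. It then remains to minimize $t\mapsto\psi(x,t)=\|x-x_0\|^2/t-2(x-x_0)^{T}u+(C+\|u\|^2)t$ over this admissible set. Invoking the standing non-degeneracy hypothesis $C+\|u\|^2\neq0$, this is a strictly convex function of $t$ with unique unconstrained minimizer $t^{*}=\|x-x_0\|/\sqrt{C+\|u\|^2}$ and value \eqref{energy_critical_pt}; substituting $t^{*}$ into \eqref{energy_assumption} gives $\|v(t^{*})\|^2=C+2\|u\|^2-\tfrac{2(x-x_0)^{T}u}{\|x-x_0\|}\sqrt{C+\|u\|^2}$. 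If this does not exceed $V^2$ — that is, \eqref{equ:energy_equation} holds at $t^{*}$ — then $t^{*}$ is admissible and yields the first branch of the asserted formula for $\|v\|$.

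In the complementary case $\|v(t^{*})\|>V$, I would study $g(t):=\|v(t)\|^2=\|x-x_0\|^2/t^2+\|u\|^2-2(x-x_0)^{T}u/t$. As a function of $1/t$ it is an upward parabola, so in $t$ it blows up as $t\to0^{+}$, decreases then increases (with its minimizer possibly at $+\infty$), and tends to $\|u\|^2\le V^2$; moreover $t^{*}$ lies to the left of $g$'s minimizer. Consequently $\{t:g(t)\le V^2\}$ is a single half-line $[t_0,+\infty)$, where $t_0>t^{*}$ is the relevant root of $(\|u\|^2-V^2)t^2-2(x-x_0)^{T}u\,t+\|x-x_0\|^2=0$ exhibited just above the lemma. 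Since $\psi(x,\cdot)$ is strictly increasing on $(t^{*},+\infty)$, its minimum over $[t_0,+\infty)$ is attained at $t_0$, where $\|v\|=V$ by construction; this is the second branch. Assembling the two cases gives the stated expression for $\|v\|$ and shows the optimal trajectory is the constant-velocity straight line throughout.

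The hard part is the verification step — passing from ``the constructed $\psi$ satisfies the HJB PDE on the reachable set'' to ``the straight-line control is globally optimal for fixed endpoints and fixed time.'' This requires handling the initial/terminal data, the non-smooth interface with the $\{+\infty\}$ region, and excluding competing non-straight-line controls; the feasibility remark above is what neutralizes the times where $\psi$ fails to solve the PDE. A secondary point needing explicit care is confirming, in the second case, that the admissible set is indeed the single half-line $[t_0,+\infty)$ — so that monotonicity of $\psi$ on $(t^{*},+\infty)$ locates the minimizer at its left endpoint — rather than also having a component to the left of $t^{*}$; the unimodal shape of $g$ and the fact that $t^{*}$ precedes $g$'s minimizer (equivalently $t^{*}\le\|x-x_0\|^2/((x-x_0)^{T}u)$ when $(x-x_0)^{T}u>0$) settle this. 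Finally, the hypothesis $C+\|u\|^2\neq0$ is precisely what keeps $t^{*}$ finite and the convexity argument valid, which is the technical issue flagged in Section \ref{completeness_energy}.
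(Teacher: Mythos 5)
Your proposal is correct, and its main line --- construct $\psi(x,t)$ for the constant-velocity straight-line motion, verify it solves the HJB equation \eqref{energy_hjb} via the maximizer $v^{*}=\tfrac12\nabla\psi$, then minimize over $t$ with the two cases $t^{*}$ admissible versus $t_{0}>t^{*}$ forced by the speed bound --- is exactly the route the paper takes in Section \ref{completeness_energy}. The one genuinely different ingredient is your parenthetical Jensen/Cauchy--Schwarz argument: from $\int_0^T v\,dt = x-x_0-uT$ you get $\int_0^T(\|v\|^2+C)\,dt\ge \|x-x_0-uT\|^2/T+CT$ with equality only for constant $v$, and since $\|\tfrac1T\int_0^T v\,dt\|\le V$ the constant competitor is automatically admissible. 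This is strictly stronger than what either you or the paper extract from the HJB computation, because it proves global optimality over \emph{all} admissible controls at fixed $T$ without invoking a verification theorem or worrying about the interface with the $+\infty$ region --- precisely the step you flag as ``the hard part.'' I would make that elementary argument the primary proof of the fixed-$T$ claim rather than a fallback. Your explicit observation that failure of \eqref{energy_assumption} means $\|x-x_0-ut\|>Vt$ and hence genuine unreachability (not merely a breakdown of the construction) is also a point the paper leaves implicit, and your check that $t^{*}\le \|x-x_0\|^2/((x-x_0)^{T}u)$ via Cauchy--Schwarz correctly identifies the monotonicity structure of $\|v(t)\|^2$ (note the paper's displayed interval endpoint $\frac{(x-x_0)^{T}u}{\|x-x_0\|^2}$ has the fraction inverted; your version is the right one). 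No gaps.
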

\begin{remark}\label{remark_41}
When \eqref{equ:energy_equation} does not hold, the minimizer of the energy optimal problem is the same as the minimizer of travel time optimal one. Hence, if the constant running cost $C$ is large enough, solving the energy optimal problem is equivalent to solving the travel time optimal problem.
\end{remark}

Based on the proof of Lemma \ref{time_lemma} and Lemma \ref{energy_lemma}, we can have the following theorem, which tells the optimal path structure within each constant flow region, given entrance and exit locations.
\begin{theorem}\label{inter_thm}
In each constant flow region, given the entrance and exit locations, the vehicle motion defined in Lemma \ref{time_lemma} and Lemma \ref{energy_lemma} solves the HJB equation
\begin{equation*}
\psi_t(x,t)+\max_v\{\nabla\psi(x,t)^T(u+v)-L(x,v)\}=0
\end{equation*}
for $L=1$ and $L=\|v\|^2+C$ respectively. Moreover, among all the solutions of the above HJB, motion in these two lemmas gives the path with shortest time/minimum energy. Thus, we have the optimal solution of the sub-problem in each region.
\end{theorem}


\subsection{A general convex Lagrangian}
In general, if we only assume the Lagrangian $L=L(v)$ is a convex function and the dynamics is $\dot{x}=f(u+v)$ where $f$ is invertible and $0$ is in the range of $f$ (there exists some $y$ with $\|y\|<+\infty$ such that $f(y)=0$), we can have similar optimal path within a constant flow field and the result is stated in Theorem \ref{thm:general}.

\section{Simulation Results}\label{experiments}
In this section, we provide simulations to validate the strength of the proposed method. First, the time-optimal and energy-optimal path planning examples with vehicle travel in simple canonical time flow field are presented. This example serves as a benchmark example wherein, we compare the solution obtained by our algorithm to solution derived from other path planning methods. Then we present a path planning example of using the proposed method to plan the time-optimal and energy-optimal path in a realistic ocean surface flow field. This simulation is intended to verify the performance of the proposed method in a highly complicated and strong real ocean flow field.

\subsection{Jet flow in 3D space}\label{3djet}
For this benchmark example, we present path planning using the proposed method in a jet flow in $3D$ space. The domain consists of three regions, divided by two boundary surfaces, $z = 10$ and $z = 15$. In the region where $z\in(0, 10)$, the flow speed is $(0.5, 0, 0)$. There is strong jet flow in the region where $z\in(10, 15)$ with flow speed $(2, 1, 0)$. The flow speed is zero in the region where $z\in(15, 20)$. The starting position is assigned at the origin, while the goal position is assigned at $(0, 0, 20)$.
\begin{figure*}[!t]
  \centering
 \includegraphics[width=0.4\textwidth]{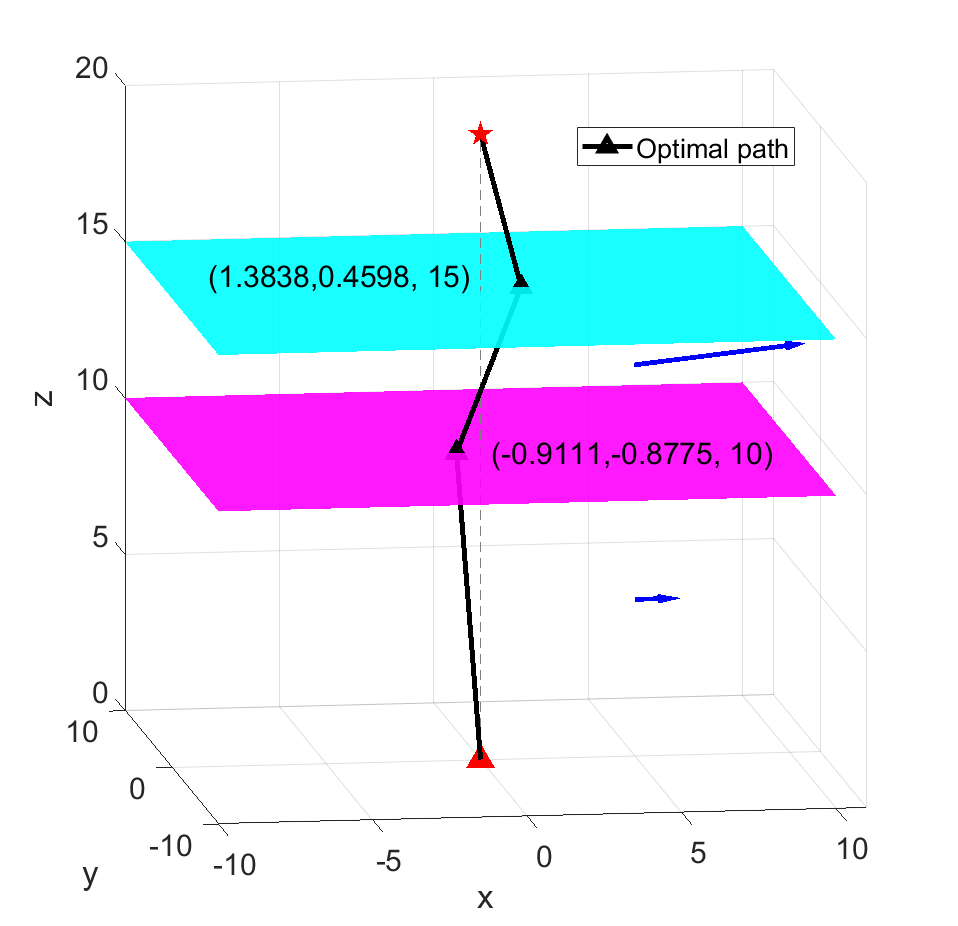}
 \includegraphics[width=0.4\textwidth]{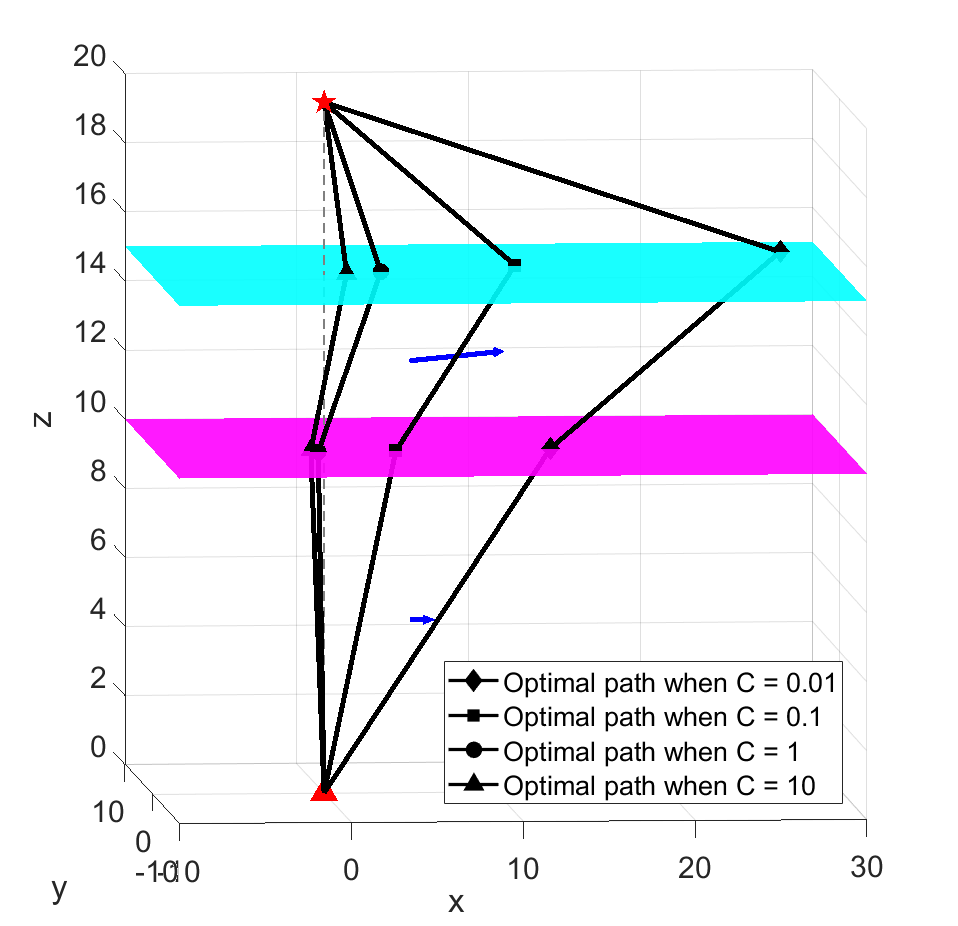}
  \caption{\textbf{(Left)}: Time optimal path planned by the proposed method. \textbf{(Right)}: Energy optimal path planned by the proposed method.. In both plots, boundaries of the jet flow are denoted by the colored surfaces. The flow speed in the domain is represented by the blue arrows. The optimal path is marked by black line, while the marker position denotes junction points computed by the proposed method.}
  \label{sim3d}
\end{figure*}

The left figure in Fig.~\ref{sim3d} shows the time-optimal path planned by the proposed method. The time-optimal solution is compared with the time-optimal path planned by the LSM. 
The comparison result is shown in Table \ref{sim5cmp_to}. In this comparison, assuming the path segment $x_{i+1} - x_i$ travels from the boundary surface $f_{\alpha_i\beta_i}$ to reach the boundary surface $f_{\alpha_{i+1}\beta_{i+1}}$, we define $\theta_i$ as the angle between path segment $x_{i+1} - x_i$ and the boundary surface $f_{\alpha_i\beta_i}$. $\gamma_i$ is defined as the angle between the projection of $x_{i+1} - x_i$ on the boundary surface $f_{\alpha_i\beta_i}$ and the x-axis of $f_{\alpha_i\beta_i}$, $\theta_i \in (0, 90\degree], \gamma_i \in (-180\degree, 180\degree]$. From the table, $\theta_i$ and $\gamma_i$ computed from the proposed method and the LSM are similar, with approximately $1\degree$ difference. Travel time of the optimal path planned by the proposed method and LSM are also approximately the same. This shows that the proposed algorithm converges to the optimal solution.


\begin{table}[]
\begin{center}
\caption{Comparison between using the proposed method, and LSM for time-optimal path planning}
\begin{tabular}{||c|c|c||}
\hline
  &   Proposed Method & LSM \\
  \hline\hline
 $\theta_1$  & $82.7924$ & $83.5659$ \\
 \hline
 $\theta_2$     & $62.0255$ & $63.3118$  \\
 \hline
 $\theta_3$    & $73.7397$ & $73.8027$ \\
 \hline
 $\gamma_1$  & $-136.0775$ & $-135.6592$ \\
 \hline
 $\gamma_2$  & $30.2293$ & $30.2407$ \\
 \hline
 $\gamma_3$  & $-161.6199$ & $-161.2246$ \\
 \hline
Total cost & $6.9096$ & $6.9826$ 
\\
 \hline
\end{tabular}
\label{sim5cmp_to}
\end{center}
\end{table}

The energy-optimal path is shown in the right figure of Fig.~\ref{sim3d}. The energy-optimal path when $C = 10$ is exactly the same as the time-optimal path. As the assigned $C$ decreases, the energy cost is attached relatively more weight in the cost function. Therefore, the vehicle tends to save more energy to go with the flow in the bottom region and the jet flow region. Thus, the energy-optimal path deviates from the time-optimal path as $C$ decreases.

\subsection{Surface ocean flow}\label{realflow}

In this section we present path planning simulation of an underwater glider traveling in real ocean surface flow field near Cape Hatteras, North Carolina, a highly dynamic region characterized by confluent western boundary currents and convergence in the adjacent shelf and slope waters. While deployed, the glider is subject to rich and complex current fields driven by a combination and interaction of Gulf Stream, wind, and buoyancy forcing, with significant cross-shelf exchange on small spatial scales that is highly challenging for planning algorithms. 
While the energy efficiency of the glider's propulsion mechanism permits endurance of weeks to months, the forward speed of the vehicles is fairly limited (0.25-0.30 m/s), which can create significant challenges for navigation in strong currents.  Use of a thruster in a so-called ``hybrid" glider configuration can increase forward speed to approximately 1 m/s \cite{ji2019design}, but at great energetic cost.  The continental shelf near Cape Hatteras is strongly influenced by the presence of the Gulf Stream, which periodically intrudes onto the shelf, resulting in strong and spatially variable flow that can be nearly an order of magnitude greater than the forward speed of the vehicle (2+ m/s).  Due to the high flow speed, we consider the deployment of a hybrid underwater glider in this simulation, and consider the vehicle speed $V = 1$ m/s.

The input flow map for path planning is given by a 1-km horizontal resolution version of the Navy Coastal Ocean Model (NCOM) \cite{martin2000} made available by J. Book and J. Osborne (Naval Research Laboratory, Stennis Space Center). The domain contains $130\times130$ grid points. One snapshot of the dynamic flow field is shown in  Fig.~\ref{sim_plot}. 
We partition the flow field using the algorithm proposed in \cite{hou2019partitioning}. The boundaries of the divided regions are shown in Fig.~\ref{sim_plot}.

We perform 3 sets of simulation, with each set contains 10 test cases. The start and destination position are chosen such that the distance between the two points is 40, 100, or 130 km. Fig. \ref{sim_plot} shows one test case, where $d = 100$ km. 
To verify performance of the proposed algorithm, we compare its simulation result with the A* and the LSM. Both A* and LSM run on the $130\times 130$ rectangular grid cells. To avoid the incompleteness issue of A* \cite{kularatne2018going, Soulignac2011}, in the node generation process of A*, we consider each grid point have 16 neighboring nodes. Comparison between the 3 algorithms is shown in Table \ref{tab:sim}. We compare the computation cost of the three algorithms, and the total travel cost of the optimal path derived by the three algorithms. Note that for the proposed algorithm, even though the optimal path is computed in the partitioned flow field, we compute the total travel cost of the vehicle tracking the optimal path in the original flow field given by NCOM. For the 3 set of simulations, the computation cost and total travel cost is averaged over the 10 test cases. 

For all 3 sets of simulation, the proposed algorithm takes less computation time to compute the optimal solution to the minimum-time planning problem. The proposed algorithm takes significantly less computation time in the simulation sets with shorter distance between the start and the destination node. The reason is that with smaller $d$, the decision tree is shallow, and the proposed algorithm only need to search through a small number of nodes to find the optimal solution. The total cost of optimal path computed by the proposed algorithm is comparable to the optimal solution from the A* and the LSM method.

\begin{figure*}[ht]
  \centering
\includegraphics[width=0.4\textwidth]{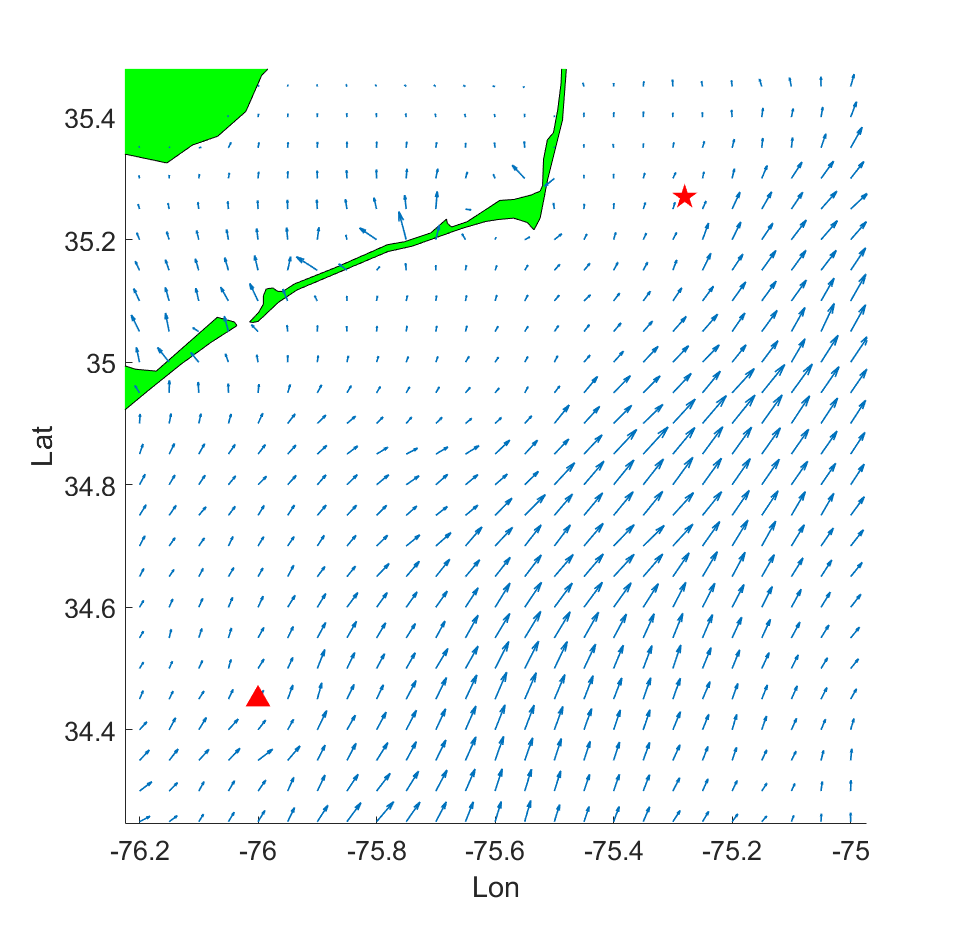}
\includegraphics[width=0.4\textwidth]{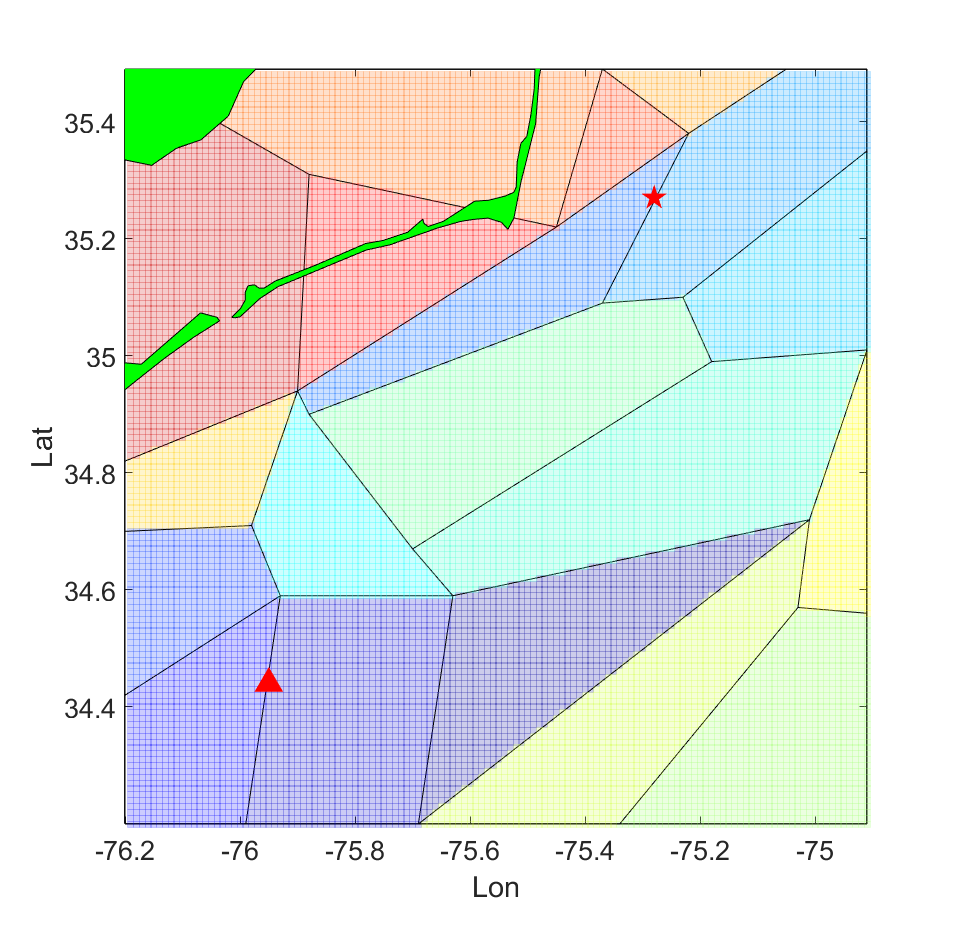}
  \caption{\textbf{(Left)}: Surface ocean flow field on May 27, 2017, 00:00 UTC at Cape Hatteras, NC. Red triangle and star indicate the starting and goal  position (when $d = 100$ km). \textbf{(Right)}: Partitioned deployment region. The partitioned cells are represented by the colored cells.. The original and the partitioned flow field. }
  \label{sim_plot}
\end{figure*}

\begin{figure*}[ht]
  \centering
\includegraphics[width=0.4\textwidth]{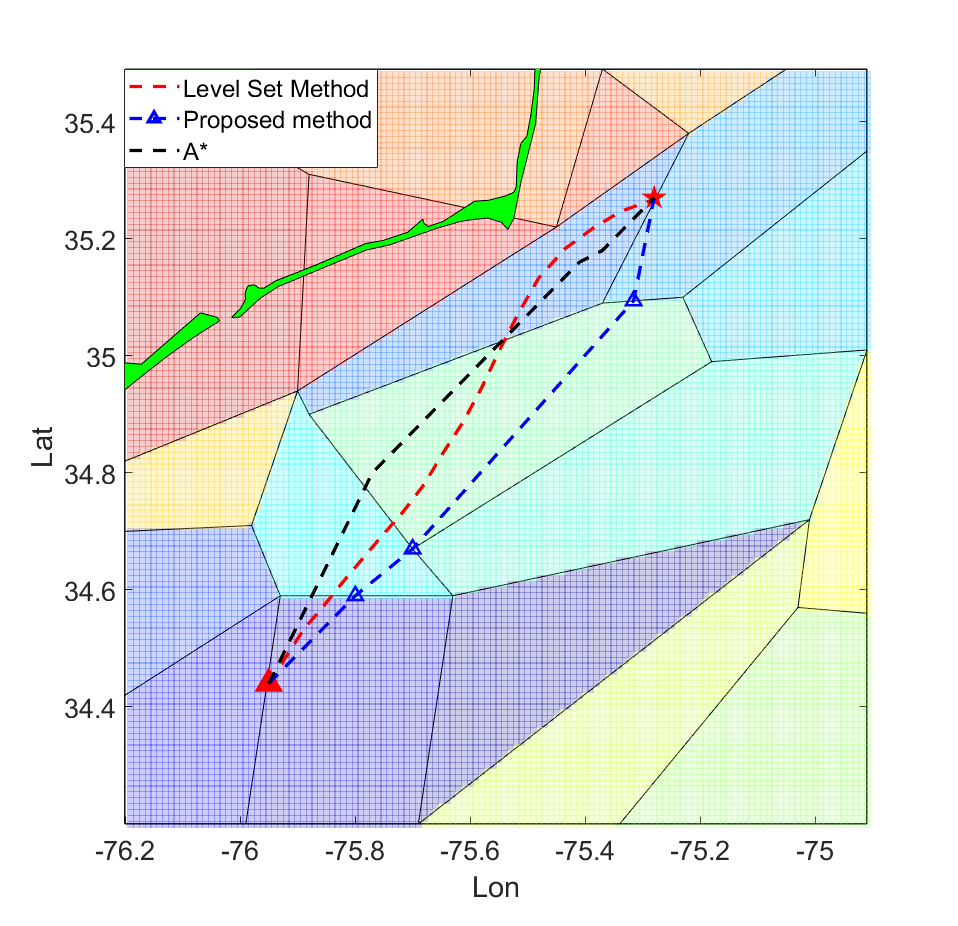} 
\includegraphics[width=0.4\textwidth]{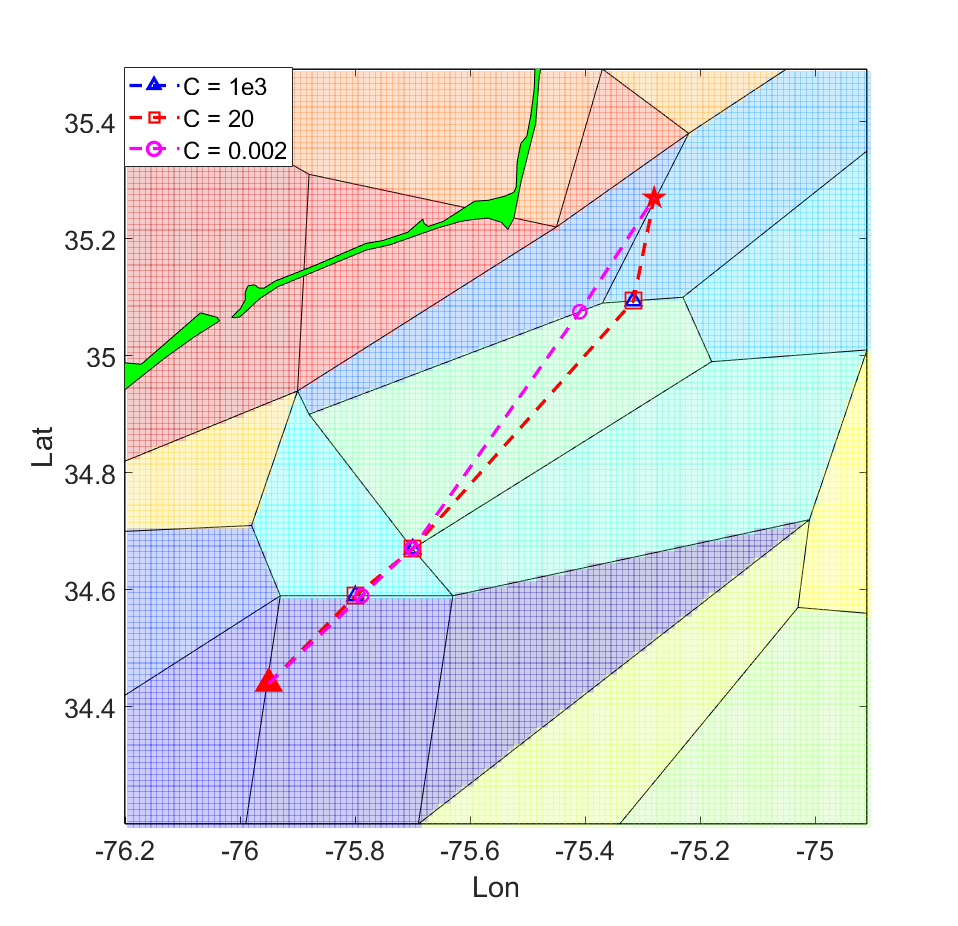} 
  \caption{\textbf{(Left)}: Time optimal path planned by the proposed method, the A* method and the LSM. \textbf{(Right)}: Energy optimal path planned by the proposed method, when $C$ takes different value. The optimal path is marked by colored line, while the marker position denotes junction points computed by the proposed method.}
  \label{sim4}
\end{figure*}

\begin{table*}[h]
\centering
\caption{Comparison between using the proposed method and existing algorithms for time-optimal path planning}
\begin{tabular}{ c c c c c }
\toprule
                        &                 & LSM           & A*       & Proposed alg. \\ 
                        \botrule
\multirow{2}{*}{40 km}  & Comp Time        &  $85.625$ s &  $25.7969$ s  &     $15.3225$ s     \\
\cline{2-5}
                        & Total Cost       &  $24.6594$ hrs   &  $26.4190$ hrs   & $27.3210$ hrs         \\ \cline{1-5}
\multirow{2}{*}{100 km} & Comp Time        & $281.938$ s   & $169.313$ s & $84.921$ s    \\ 
\cline{2-5} 
                        & Total Cost       & $35.4476$ hrs & $35.8012$ hrs & $36.1229$ hrs \\ \cline{1-5}
\multirow{2}{*}{130 km} & Comp Time &   $195.625$ s    &    $173.813$ s             &  $93.4643$ s   \\ 
\cline{2-5} 
                        & Total Cost       &   $ 38.5701$ hrs    &  $36.9645$ hrs  &  $40.3937$ hrs         \\ 
\botrule
\end{tabular}\label{tab:sim}
\end{table*}

The time-optimal and energy-optimal paths are shown in Fig. \ref{sim4}. For the time-optimal path, the planned path computed by the proposed algorithm takes a slight detour towards the off-shore direction to take advantage of the high ocean flow speed towards the North. Since the input to the proposed algorithm is the partitioned flow field, while the input to the LSM algorithm is the grid represented flow field, the optimal path computed by the proposed algorithm is different from the result of LSM. However, as shown in Table \ref{tab:sim}, solution quality of both the proposed algorithm and the LSM are comparable. In Table \ref{tab:sim} we present the total travel cost of the planned path for both LSM and the proposed algorithm. The total travel cost is computed in the actual flow field, instead of the partitioned flow field. As shown in the table, the total cost of the planned path computed by the two algorithms are comparable, while the proposed algorithm spends less computation time than the LSM. 

Energy optimal path planning generates same results when $C = 1e03$ and $C = 20$. In these cases, the running cost is much larger than the vehicle speed. Thus the energy-optimal planned path is identical to the time-optimal path. When $C = 2e-3$, the running cost is much less than vehicle speed. In this case, instead of making use of the strong jet flow, the proposed method generates planned path that go straight towards the goal position.

\section{Conclusion} 
In this paper, we propose a new method using Method of Evolving Junctions to solve the AUV path planning problem in an arbitrary flow field with the dynamics being $\dot{x}=u+v$ where $u$ is the flow field and $v$ is the vehicle velocity. Taking advantage of the explicit solution in constant flow field being straight line motion, we partition the flow field into piece-wise constant vector field and transform the optimal control problem into a finite dimensional optimization, using Intermittent Diffusion method to get the global minimizer. In this way, we can get rid of the system error induced by discretizing the continuous space. Also, our method can be trivially extended to high dimensional general vehicle path planning problems in the same time complexity without making further assumption.

\section*{Acknowledgment}

The authors would like to thank the support from NSF grants DMS-1830225,  ONR grant N00014-21-1-2891, 
ONR grants  N00014-19-1-2556 and N00014-19-1-2266;  AFOSR grant FA9550-19-1-0283; NSF
grants  CNS-1828678, S\&AS-1849228 and GCR-1934836; NRL grants N00173-17-1-G001 and N00173-19-P-1412 ; and NOAA grant NA16NOS0120028.

\section*{Appendix}\label{appendix}
In this appendix, we give proofs for the properties related to Algorithm \ref{alg_1}.

First we present proof of Theorem \ref{converge_thm}. The proof leverages the following theorem in \cite{chow2013global}.
\begin{theorem}\label{thm:ID}
Let $Q$ be the set of global minimizers, $U$ be a small neighborhood of $Q$ and $\lambda_{opt}$ the optimal solution obtained by the ID process. Then for any given $\epsilon>0$, there exists $\tau>0$, $\sigma_0>0$ and $N_0>0$ such that if $T_i-S_i>\tau$, $\sigma_i<\sigma_0$ (for $i=1,\cdots,N$) and $N>N_0$,
\begin{equation*}
\mathbb{P}(\lambda_{opt}\in U)\geq1-\epsilon.
\end{equation*}
\end{theorem}
Then we provide completeness proof of Algorithm \ref{alg_1}. 

\begin{proof}[Proof of Theorem \ref{converge_thm}:]
The proof includes two steps. First, we show that the decision tree returns all cell sequences with total cost less than or equal to the lowest-cost path found so far. Then we prove that given a fixed cell sequence, the global minimizer can be found by Algorithm \ref{alg_2}.

The DFS algorithm, which avoids repeated states in the graph, is complete in finite state spaces \cite{russell2002artificial}. In a static flow field divided into convex regions, the optimal path will not visit a cell boundary curve more than one time. Hence, the optimal path connecting the root and the target node in the decision tree does not contain loops. Therefore, the BnBDFS returns all cell sequences with total cost less than or equal to the lowest-cost found so far.

Next we show that the ID algorithm is complete.
We combine Theorem \ref{inter_thm}, together with Bellman principle, to show that the global optimal path must be in the structure of constant motion within each flow region. To prove that the proposed algorithm is convergent, we only need to show that there exists a global minimizer $\lambda^*=(\lambda_1^*,\cdots,\lambda_K^*)$, around which there is a closed neighborhood $U\subset\prod_{i=1}^KD_i$ such that $vol(U)>0$ ($vol$ is the product Lebesgue measure in $\prod_{i=1}^KD_i$) and for all $\lambda\in U$, the gradient flow $\dot{\lambda}=-\nabla J(\lambda)$ converges to $\lambda^*$. If this condition holds, we can follow the proof of intermittent diffusion and get the desired results.

To this end, if there exists such $U$ that $vol(U)>0$ and for all $\lambda\in U$, we have $J(\lambda)\leq J(\mu)$ for arbitrary $\mu\in S$ for some $S\subset U$, then the proof is done. Now if the global minimizers are isolated, then given any global minimizer $\lambda^*=(\lambda_1^*,\cdots,\lambda_K^*)$, since $J$ is continuous differentiable, we can have a closed neighborhood $U\subset\prod_{i=1}^KD_i$ with $vol(U)>0$ (within the neighborhood, the dimension of the domain does not change) such that $J(\lambda)>J(\lambda^*)$ and $\nabla J(\lambda)\neq0$ for all $\lambda\in U\backslash\{\lambda^*\}$, then the gradient flow starting at $\lambda\in U$ converges to $\lambda^*$. 

Therefore, we prove the algorithm is complete.
\end{proof} 


\begin{proof}[Proof of Lemma \ref{time_lemma}]

We write the value function as
\begin{equation*}
\psi(x,t)=\frac{\|x-x_0\|}{\|v+u\|}.
\end{equation*}
To make the problem complete, we define $\psi(x,t)=+\infty$ if the vehicle cannot reach $x$ in time $t$, which gives the final value function to be
\begin{equation*}
\psi(x,t)=\left\lbrace\begin{array}{ll}\frac{\|x-x_0\|}{\|v+u\|}&\frac{\|x-x_0\|}{\|v+u\|}\leq t\\+\infty&\text{otherwise}\end{array}\right..
\end{equation*}
If only the reachable part is considered, from the above equation, we can calculate $\psi_t=0$ and
\begin{equation}\label{gradpsi}
\begin{aligned}
\nabla\psi=\frac{1}{\|v+u\|^2}\Big(&\|v+u\|\frac{x-x_0}{\|x-x_0\|}\\&-\|x-x_0\|\nabla\|v+u\|\Big).
\end{aligned}
\end{equation}
 We can rewrite $v=v^0+v^\perp$ and $V^2=\|v^0\|^2+\|v^\perp\|^2$ if we denote
\begin{eqnarray*}
v^0&=&\frac{(x-x_0)}{\|x-x_0\|}\frac{(x-x_0)^Tv}{\|x-x_0\|},\\
v^\perp&=&\left(I-\frac{(x-x_0)}{\|x-x_0\|}\frac{(x-x_0)^T}{\|x-x_0\|}\right)v,
\end{eqnarray*}
where $I$ is the identity matrix. And $u$ can be decomposed in the same manner $u=u_0+u^\perp$. It is easy to see that $v^\perp=-u^\perp$ since $(v+u)/\|v+u\|=(x-x_0)/\|x-x_0\|$. Then, we see that $\|v+u\|=\|v_0\|+\|u_0\|$ and
\begin{equation*}
\begin{aligned}
&\sqrt{\left(\frac{(x-x_0)^Tu}{\|x-x_0\|}\right)^2+V^2-\|u\|^2}\\
=&\Big(\|u^0\|^2+\|v^0\|^2+\|v^\perp\|^2-\|u^0\|^2-\|u^\perp\|^2\Big)^{1/2}\\
=&\|v^0\|.
\end{aligned}
\end{equation*}
Hence, we have
\begin{align*}
&\nabla\|v+u\|\\
=&\nabla \left(\frac{(x-x_0)^Tu}{\|x-x_0\|}+\sqrt{\left(\frac{(x-x_0)^Tu}{\|x-x_0\|}\right)^2+V^2-\|u\|^2}\right)\\
=&\frac{\|v+u\|}{\|v^0\|}\nabla\frac{(x-x_0)^Tu}{\|x-x_0\|}\\
=&\frac{\|v+u\|}{\|x-x_0\|\|v^0\|}\left(I-\frac{(x-x_0)}{\|x-x_0\|}\frac{(x-x_0)^T}{\|x-x_0\|}\right)u\\
=&\frac{\|v+u\|}{\|x-x_0\|\|v^0\|}u^\perp.
\end{align*}
Taking $\nabla\|v+u\|$ back to \eqref{gradpsi} and noticing that $v^\perp=-u^\perp$, we reduce the gradient to be
\begin{eqnarray*}
\nabla\psi&=&\left(\frac{x-x_0}{\|x-x_0\|}-\frac{u^\perp}{\|v^0\|}\right)\frac{1}{\|v+u\|}=\frac{v}{\|v^0\|\|v+u\|}.
\end{eqnarray*}
With the above equation, the Hamiltonian is
\begin{align*}
H&=\sup_{\hat{v}:\|\hat{v}\|\leq V}\left(\nabla\psi^T(\hat{v}+u)-1\right)\\
&=\sup_{\hat{v}:\|\hat{v}\|\leq V}\left\lbrace\frac{v^T}{\|v^0\|}\frac{\hat{v}+u}{\|v+u\|}\right\rbrace-1\\
&=\frac{1}{\|v+u\|}\left(\sup_{\hat{v}:\|\hat{v}\|\leq V}\left\lbrace\frac{v^T\hat{v}}{\|v^0\|}\right\rbrace+\|u_0\|-\frac{\|u^\perp\|^2}{\|v^0\|}\right)-1\\
&=\frac{1}{\|v+u\|}\left(\frac{V^2}{\|v^0\|}+\|u^0\|-\frac{\|u^\perp\|^2}{\|v^0\|}\right)-1\\
&=\frac{1}{\|v+u\|}(\|v^0\|+\|u^0\|)-1=0,
\end{align*}
which leads to the conclusion that the value function induced by the maximum speed constant velocity motion solves the Hamilton-Jacobi equation, thus is the optimal moving pattern in a constant flow speed region since $\min_t\psi=\psi$.
\end{proof}

Meanwhile, using the same notation and logic, we can give the proof of Proposition \ref{differentiable_prop_energy}:

\begin{proof}[Proof of Proposition \ref{differentiable_prop_energy}]
First we show that the objective function is well-defined if there exists a feasible trajectory, and $\|u_{c_i}\|\leq V$.
If $(x_i-x_{i-1})^T u_{c_i}\leq0$, unless $V>\|u_{c_i}\|$, there does not exists a feasible path. Therefore,
\begin{equation*}
\begin{aligned}
&(x_i-x_{i-1})^Tu_{c_i}<\\&\sqrt{\left((x_i-x_{i-1})^Tu_{c_i}\right)^2+\|x_i-x_{i-1}\|^2(V^2-\|u_{c_i}\|^2)}.
\end{aligned}
\end{equation*}
Since $\|u_{c_i}\|^2-V^2<0$, we have $t_i^*>0$. On the other hand, if $(x_i-x_{i-1})^Tu_{c_i}>0$, we can have two cases: $V>\|u_{c_i}\|$, which shares the same conclusion as the first case, and $V<\|u_{c_i}\|$. In the latter circumstance, since $\|u_{c_i}\|^2-V^2>0$ and
\begin{equation*}
\begin{aligned}
&(x_i-x_{i-1})^Tu_{c_i}>\\&\sqrt{\left((x_i-x_{i-1})^Tu_{c_i}\right)^2+\|x_i-x_{i-1}\|^2(V^2-\|u_{c_i}\|^2)},
\end{aligned}
\end{equation*}
it is still true that $t_i^*>0$.

Meanwhile, When $(x_i-x_{i-1})^Tu_{c_i}>0$, $t_i^*>0$ still holds if $V=\|u_{c_i}\|$ and actually
\begin{align*}
&t_i^*=\lim_{V^2-\|u_{c_i}\|^2\rightarrow0}\frac{1}{\|u_{c_i}\|^2-V^2}\Big((x_i-x_{i-1})^Tu_{c_i}\\
&-\sqrt{\left((x_i-x_{i-1})^Tu_{c_i}\right)^2+\|x_i-x_{i-1}\|^2(V^2-\|u_{c_i}\|^2)}\Big)\\
&=\frac{\|x_i-x_{i-1}\|^2}{2(x_i-x_{i-1})^Tu_{c_i}}>0.
\end{align*}
However, if $(x_i-x_{i-1})^Tu_{c_i}\leq0$, $V=\|u_{c_i}\|$ becomes a singular point since there is no feasible path. Thus, in this case, we cannot formally solve the problem.

Since $g^t_i(x_i,x_{i-1})=g^t_i(x_i-x_{i-1})$ and $g^e_i(x_i,x_{i-1})=g^e_i(x_i-x_{i-1})$, we only need to consider the differentibility of
\begin{equation*}
g(a)=\left\lbrace\begin{array}{ll}
    g_1(a) & \text{if \eqref{energy_speed_condition} holds} \\
    g_2(a) & \text{otherwise}
\end{array}\right.
\end{equation*}
where
\begin{align*}
g_1(a)&=2\sqrt{\|u_{c_i}\|^2+C}\|a\|-2a^Tu_{c_i}\\&=2\|a\|\left(\sqrt{\|u_{c_i}\|^2+C}-\|u^0_{c_i}\|\right),\\
g_2(a)&=\frac{V^2+C}{\|u_{c_i}\|^2-V^2}\Big(a^Tu_{c_i}\\
-& \sqrt{(a^Tu_{c_i})^2+\|a\|^2(V^2-\|u_{c_i}\|^2)}\Big)\\&=\frac{(V^2+C)\|a\|}{\|u_{c_i}^0\|+\|v^0\|}.
\end{align*}
First of all, when equality in \eqref{energy_speed_condition} holds, we have
\begin{equation}\label{energy_u_plus_c}
\begin{aligned}
&\sqrt{\|u_{c_i}\|^2+C}=\|u_{c_i}^0\|\pm\|v^0\|\\
\Longrightarrow&\sqrt{\|u_{c_i}\|^2+C}=\|u_{c_i}^0\|+\|v^0\|.
\end{aligned}
\end{equation}
We take the plus sign since $\sqrt{\|u_{c_i}\|^2+C}\geq\|u_{c_i}\|$. Meanwhile from \eqref{energy_speed_condition} and \eqref{energy_u_plus_c}, we can derive the following equation
\begin{equation*}
V^2+C=2\|v^0\|(\|u_{c_i}^0\|+\|v^0\|).
\end{equation*}
Therefore, $g(a)$ is continuous. Similar calculations shows that
\begin{align*}
\nabla g_1&=2\left((\sqrt{\|u_{c_i}\|^2+C})\frac{a}{\|a\|}-u_{c_i}\right)=v^0-u_{c_i}^\perp=2v,\\
\nabla g_2&=\frac{V^2+C}{\|v^0\|\|v+u_{c_i}\|}v=2v,
\end{align*}
which gives us the desired result.
\end{proof}

\begin{proof}[Proof of Lemma \ref{energy_lemma}]

In the case of minimum energy planning, $L(x,v)=\|v\|^2+C$ where $C\geq0$ is a constant running cost. To calculate the optimal solution for the vehicle running from $x_0$ to the target $x$ in a constant flow velocity field, we again study the constant speed straight line motion. However in this circumstance, the vehicle may no longer travel with maximum speed, hence we take the travel time in the region into consideration. Suppose that the the vehicle moves from $x_0$ to $x$ in time $t$, we set the vehicle velocity to be
\begin{equation*}
v=\frac{x-x_0}{t}-u,
\end{equation*}
assuming that
\begin{equation}\label{energy_assumption}
\|v\|^2=\frac{\|x-x_0\|^2}{t^2}+\|u\|^2-\frac{2(x-x_0)^Tu}{t}\leq V^2.
\end{equation}
Then the value function is
\begin{equation*}
\begin{aligned}
\psi(x,t)&=(\|v\|^2+C)t\\&=\frac{\|x-x_0\|^2}{t}-2(x-x_0)^Tu+(C+\|u\|^2)t.
\end{aligned}
\end{equation*}
Further we take
\begin{align*}
\psi(x,t)=\left\lbrace\begin{array}{ll}
    \begin{aligned}
    &\frac{\|x-x_0\|^2}{t}\\&-2(x-x_0)^Tu+(C+\|u\|^2)t 
    \end{aligned}
    & \|v\|\leq V \\
    +\infty & \text{otherwise}
\end{array}\right..
\end{align*}
Then by direct calculation with the finite part of $\psi$, we have
\begin{align}
&\psi_t=C+\|u\|^2-\frac{\|x-x_0\|^2}{t^2},\label{energy_psi_set1}\\
&\nabla\psi=\frac{2(x-x_0)^Tu}{t}-2u,\label{energy_psi_set2}\\
&\|\nabla\psi\|^2=\frac{4\|x-x_0\|^2}{t^2}+4\|u\|^2-\frac{8(x-x_0)^Tu}{t}\label{energy_psi_set3}
\end{align}
The Hamilton-Jacobi equation is in the form of
\begin{equation}\label{energy_hjb}
\psi_t+\sup_{v:\|v\|\leq V}\left\lbrace\nabla\psi^T(v+u)-\|v\|^2-C\right\rbrace=0.
\end{equation}
To solve the optimization part of \eqref{energy_hjb}, we denote
\begin{equation*}
F(v)=\nabla\psi^T(v+u)-\|v\|^2-C
\end{equation*}
and calculate its critical point as
\begin{equation*}
v^*=\frac{1}{2}\nabla\psi,
\end{equation*}
which means that the optimal is
\begin{equation}\label{energy_hamiltonian}
H=\sup_{v:\|v\|\leq V}F(v)=\frac{1}{4}\|\nabla\psi\|^2+\nabla\psi^Tu-C
\end{equation}
and by \eqref{energy_psi_set2}, we have 
\begin{equation}
\begin{aligned}
\|v^*\|^2&=\frac{1}{4}\|\nabla\psi\|^2=\frac{\|x-x_0\|^2}{t^2}+\|u\|^2-\frac{2(x-x_0)^Tu}{t} \\
& \leq V^2,
\end{aligned}
\end{equation}
which leads to the fact that $F(v^*)=\sup_{v:\|v\|\leq V}F(v)$. Let us take (\ref{energy_psi_set2}),(\ref{energy_psi_set3}) into \eqref{energy_hamiltonian} and the result is
\begin{equation}\label{energy_H}
H=\frac{\|x-x_0\|^2}{t^2}-\|u\|^2-C.
\end{equation}
Combining (\ref{energy_psi_set1}) and \eqref{energy_H} finally results in the constructed $\psi$ being the solution of \eqref{energy_hjb}.

Based on the solution $\psi$, we further find the minimizer over time $t$ and solve the minimization problem as follow:
\begin{equation*}
\begin{aligned}
&\min_{t\geq0}\psi=\min_{t\geq0}(\|v\|^2+C)t\\=&\frac{\|x-x_0\|^2}{t}-2(x-x_0)^Tu+(C+\|u\|^2)t.
\end{aligned}
\end{equation*}
It is easy to see that the global minimizer of $\psi$ over $t$ is
\begin{equation*}
t^*=\frac{\|x-x_0\|}{\sqrt{C+\|u\|^2}}
\end{equation*}
and the corresponding minimum is
\begin{equation}\label{energy_critical_pt}
\psi^*=2\|x-x_0\|\sqrt{C+\|u\|^2}-2(x-x_0)^Tu.
\end{equation}
Thus, if $t^*$ is reachable, that is, using \eqref{energy_assumption}, we have
\begin{equation*}
\|v(t^*)\|^2=C+2\|u\|^2-\sqrt{C+\|u\|^2}\frac{2(x-x_0)^Tu}{\|x-x_0\|}\leq V^2
\end{equation*}
the optimal is given as \eqref{energy_critical_pt}.

On the other hand, if $\|v(t^*)\|>V$, the global minimizer $t^*$ is on longer in the domain of our problem. In this case, we notice that $\|v\|^2$ is decreasing on the interval
\begin{equation*}
\left[t^*,\frac{(x-x_0)^Tu}{\|x-x_0\|^2}\right],
\end{equation*}
and is increasing on
\begin{equation*}
\left[\frac{(x-x_0)^Tu}{\|x-x_0\|^2},+\infty\right).
\end{equation*}
Also by noticing that $\lim_{t\rightarrow+\infty}\|v\|^2=\|u\|^2\leq V^2$, we conclude that there exists $t_0>t^*$ when $t\geq t_0>t^*$, $\|v\|\leq V$. Meanwhile, when $t>t^*$, $\psi$ is monotone increasing with respect to $t$. Hence, to get the minimum, we should take the time $t=t_0$, where $\|v(t_0)\|=V$. By taking the equality in \eqref{energy_assumption}, we have then
\begin{equation*}
(\|u\|^2-V^2)t^2-2(x-x_0)^Tut+\|x-x_0\|^2=0,
\end{equation*}
from which we have
\begin{equation*}
\begin{aligned}
t_0=&\frac{1}{\|u\|^2-V^2}\Big((x-x_0)^Tu\\&-\sqrt{\left((x-x_0)^Tu\right)^2+\|x-x_0\|^2(V^2-\|u\|^2)}\Big),
\end{aligned}
\end{equation*}
and $\min_{t}\psi=(V^2+C)t_0$. 
\end{proof}

\begin{proof}[Proof of Theorem \ref{thm:general}]
Denoting $g(w)$ to be the inverse of $f$ such that if $w=f(u+v)$ then $u+v=g(w)$, we will show that
\begin{equation*}
\psi(x,t)=\left\lbrace\begin{array}{ll}
tL\left(g\left(\frac{x-x_0}{t}\right)-u\right)&\|g\left(\frac{x-x_0}{t}\right)-u\|\leq V\\
+\infty&\text{otherwise}
\end{array}\right.
\end{equation*}
satisfies the HJB equation \eqref{HJB}. First of all, the Hessian matrix $\mathcal{H}(v)$ is positive definite for all $\|v\|\leq V$ since $L$ is convex. Therefore, for any $v_1,v_2$ in the domain, there exists $\xi$ such that 
\begin{equation*}
\nabla_vL(v_1)=\nabla_vL(v_2)+\mathcal{H}(\xi)(v_2-v_1).
\end{equation*}
Further if $\nabla_vL(v_1)=\nabla_vL(v_2)$, then $\mathcal{H}(\xi)(v_2-v_1)=0$. Because of the positive definite property for $\mathcal{H}$, we have $v_2=v_1$, which implies that $\nabla_vL(v)$ is one-to-one.

Then we do the following calculation on the non-infinity part of $\psi$
\begin{align}
&\begin{aligned}
&\psi_t=L\left(g\left(\frac{x-x_0}{t}\right)-u\right)\\&-\left[\nabla_vL\left(g\left(\frac{x-x_0}{t}\right)-u\right)\right]^T\nabla_wg\left(\frac{x-x_0}{t}\right)\frac{x-x_0}{t}\label{equ:general_psi_t},
\end{aligned}\\
&\nabla\psi=\left[\nabla_wg\left(\frac{x-x_0}{t}\right)\right]^T\nabla_vL\left(g\left(\frac{x-x_0}{t}\right)-u\right).\label{equ:general_psi_x}
\end{align}
Since $L$ is convex, we further have the relaxed optimization
\begin{equation*}
\max_v\{\nabla\psi^T(v+u)-L(v)\}
\end{equation*}
is a convex problem and get the condition for the optimal $v^*$ to be 
\begin{equation*}
\begin{aligned}
&\left[\nabla_vf(u+v^*)\right]^T\nabla\psi=\nabla_vL(v^*)\\
\Longrightarrow&\nabla\psi=\left[\nabla_wg(f(u+v^*))\right]^T\nabla_vL(v^*).
\end{aligned}
\end{equation*}
Combining this with \eqref{equ:general_psi_x}, we have
\begin{equation}\label{equ:general_opt_v}
v^*=g\left(\frac{x-x_0}{t}\right)-u,
\end{equation}
and $\|v^*\|\leq V$ holds. Thus, $v^*$ is the maximizer of $H(x,\nabla\psi)$. Taking \eqref{equ:general_psi_t}, \eqref{equ:general_psi_x} and \eqref{equ:general_opt_v}, we have
\begin{equation*}
\psi_t+\nabla\psi^T(v^*+u)-L(v^*)=0,
\end{equation*}
implying that \eqref{HJB} holds. At last notice that
\[
\lim_{t\rightarrow\infty}L\left(g\left(\frac{x-x_0}{t}\right)-u\right)=L(g(0)-u)<\infty,
\]
since $0$ is in the range of $f$. We have that 
\[
\lim_{t\rightarrow\infty}tL\left(g\left(\frac{x-x_0}{t}\right)-u\right)=\infty.
\]
Thus, we have $t^*>0$ such that given $x$,
\begin{equation*}
t^*=\argmin_{t\geq0}\psi(x,t).
\end{equation*}
Thus, $v^*=g((x-x_0)/t^*)-u$ gives us a constant velocity motion.
\end{proof}





\bibliography{main.bib}

\end{document}